\documentclass[12pt]{article}

\setlength{\textwidth}{6.3in}
\setlength{\textheight}{8.7in}
\setlength{\topmargin}{0pt} \setlength{\headsep}{0pt}
\setlength{\headheight}{0pt} \setlength{\oddsidemargin}{0pt}
\setlength{\evensidemargin}{0pt}

\usepackage{amsmath, epsfig, cite, setspace}
\usepackage{amssymb}
\usepackage{amsfonts}
\usepackage{latexsym}
\usepackage{amsthm}

\makeatletter
\renewcommand{\@seccntformat}[1]{{\csname the#1\endcsname}.\hspace{.5em}}
\makeatother

\newtheorem{thm}{Theorem}[section]

\newtheorem{lem}[thm]{Lemma}
\newtheorem{remark}[thm]{Remark}

\renewcommand{\qed}{\hfill$\Box$\medskip}
\renewcommand{\thefootnote}{*}

\numberwithin{equation}{section}

\begin{document}
\begin{center}
{\large\bf Proof of two supercongruences of truncated hypergeometric series ${}_4F_3$}
\end{center}

\vskip 2mm \centerline{Guo-Shuai Mao}
\begin{center}
{Department of Mathematics, Nanjing
University of Information Science and Technology, Nanjing 210044,  People's Republic of China\\
{\tt maogsmath@163.com  } }
\end{center}

\vskip 0.7cm \noindent{\bf Abstract.}
In this paper, we prove two supercongruences conjectured by Z.-W. Sun via the Wilf-Zeilberger method. One of them is, for any prime $p>3$,
\begin{align*}
{}_4F_3\bigg[\begin{matrix} \frac76&\frac12&\frac12&\frac12\\ &\frac16&1&1\end{matrix}\bigg|-\frac18\bigg]_{\frac{p-1}2}\equiv p\left(\frac{-2}p\right)+\frac{p^3}4\left(\frac2p\right)E_{p-3}\pmod{p^4},
\end{align*}
where $\left(\frac{\cdot}p\right)$ stands for the Legendre symbol, and $E_{n}$ is the $n$-th Euler number.

\vskip 3mm \noindent {\it Keywords}: Supercongruence; Truncated hypergeometric series; Wilf-Zeilberger method; Euler numbers; Legendre symbol.

\vskip 0.2cm \noindent{\it AMS Subject Classifications:} Primary  11A07; Secondary 05A10, 33C20, 11B65.

\renewcommand{\thefootnote}{**}

\section{Introduction}
For $n\in\mathbb{N}=\{0,1,2,\ldots\}$, define {\it the truncated hypergeometric function}
$$
{}_{m+1}F_m\bigg[\begin{matrix}
\alpha_0&\alpha_1&\ldots&\alpha_m\\
&\beta_1&\ldots&\beta_m
\end{matrix}\bigg|\,z\bigg]_n:=\sum_{k=0}^{n}\frac{(\alpha_0)_k(\alpha_1)_k\cdots(\alpha_m)_k}{(\beta_1)_k\cdots(\beta_m)_k}\cdot\frac{z^k}{k!},
$$
where $\alpha_0,\ldots,\alpha_m,\beta_1,\ldots,\beta_m,z\in\mathbb{C}$ and
$$
(\alpha)_k=\begin{cases}\alpha(\alpha+1)\cdots(\alpha+k-1) &\text{if }k\geq 1,\\
1 &\text{if }k=0.\end{cases}
$$
In the past decade, many researchers studied supercongruences via the Wilf-Zeilberger (WZ) method (see, for instance, \cite{gl-arxiv-2019,CXH-rama-2016,he-jnt-2015,hm-rama-2017,mz-rama-2019,oz-jmaa-2016,sun-ijm-2012}). Chen, Xie and He \cite{CXH-rama-2016} confirmed a supercongruence conjetured by Z.-W. Sun \cite{sun-scm-2011}, which says that for any prime $p>3$,
$$
{}_4F_3\bigg[\begin{matrix} \frac{4}{3}&\frac12&\frac12&\frac12\\ &\frac1{3}&1&1\end{matrix}\bigg|-8\bigg]_{p-1}\equiv p(-1)^{(p-1)/2}+p^3E_{p-3}\pmod{p^4},
$$
where $\{E_n\}$ are the Euler numbers given by
$$\frac{2e^t}{e^{2t}+1}=\sum_{n=0}^\infty E_n\frac{t^n}{n!}\ (|t|<\frac{\pi}2).$$
For $n\in\mathbb{N}$, define
$$H_n:=\sum_{0<k\leq n}\frac1k,\ H_n^{(2)}:=\sum_{0<k\leq n}\frac1{k^2},\ H_0=H_0^{(2)}=0,$$
where $H_n$ with $n\in\mathbb{N}$ are often called the classical harmonic numbers. Let $p>3$ be a prime. J. Wolstenholme \cite{wolstenholme-qjpam-1862} proved that
\begin{align}\label{hp-1}
H_{p-1}\equiv0\pmod{p^2}\ \mbox{and}\ H_{p-1}^{(2)}\equiv0\pmod p,
\end{align}
which imply that
\begin{align}
\binom{2p-1}{p-1}\equiv1\pmod{p^3}.\label{2p1p}
\end{align}

Z.-W. Sun \cite{sun-ijm-2012} proved the following supercongruence by the WZ method, for any odd prime $p$,
\begin{equation}\label{sun}
{}_4F_3\bigg[\begin{matrix} \frac54&\frac12&\frac12&\frac12\\ &\frac14&1&1\end{matrix}\bigg|-1\bigg]_{p-1}\equiv(-1)^{\frac{(p-1)}2}p+p^3E_{p-3}\pmod{p^4}.
\end{equation}
Guo and Liu \cite{gl-arxiv-2019} showed that for any prime $p>3$,
\begin{equation}\label{glp4}
\sum_{k=0}^{(p+1)/2}(-1)^k(4k-1)\frac{\left(-\frac12\right)_k^3}{(1)_k^3}\equiv p(-1)^{(p+1)/2}+p^3(2-E_{p-3})\pmod{p^4},
\end{equation}
where $(a)_n=a(a+1)\ldots(a+n-1) (n\in\{1,2,\ldots\})$ with $(a)_0=1$ is the raising factorial for $a\in\mathbb{C}$. Guo and his coauthors also studied $q$-analogues of Ramanujan-type supercongruences and $q$-analogues of supercongruences of van Hamme (see, for instance, \cite{g-jmaa-2018,guo-rama-2019,guos-rm-2020}).\\
Long \cite{long-2011-pjm} and Chen, Xie and He \cite{CXH-rama-2016} proved independently that, for any odd prime $p$,
$$
{}_4F_3\bigg[\begin{matrix} \frac76&\frac12&\frac12&\frac12\\ &\frac16&1&1\end{matrix}\bigg|-\frac18\bigg]_{\frac{p-1}2}\equiv p\left(\frac{-2}p\right)\pmod{p^2}.
$$

In this paper, we first obtain the following result which confirms a challenging conjecture of Sun\cite{sun-scm-2011}:
\begin{thm}\label{Thsun} Let $p>3$ be a prime. Then
\begin{equation}\label{6n1-512}
{}_4F_3\bigg[\begin{matrix} \frac76&\frac12&\frac12&\frac12\\ &\frac16&1&1\end{matrix}\bigg|-\frac18\bigg]_{\frac{p-1}2}\equiv p\left(\frac{-2}p\right)+\frac{p^3}4\left(\frac2p\right)E_{p-3}\pmod{p^4}.
\end{equation}
\end{thm}
\begin{remark}\rm This congruence was conjectured by Z.-W. Sun (see, \cite[Conjecture 5.1]{sun-scm-2011} or \cite[(2.22)]{sun-numb-2019} with $n=1$).
\end{remark}
In order to prove Theorem \ref{Thsun}, we prove the following theorem first.
\begin{thm}\label{Thhar} Let $p$ be an odd prime. Then
\begin{equation}\label{hkhk}
\sum_{k=1}^{(p-1)/2}\frac{\binom{2k}k}{2^k}H_k^2\equiv(-1)^{(p-1)/2}q_p(2)^2-E_{p-3}\pmod p,
\end{equation}
where $q_p(2)$ stands for the Fermat quotient $(2^{p-1}-1)/p$.
\end{thm}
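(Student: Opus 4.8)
The plan is to reduce the central binomial coefficients to ordinary ones modulo $p$ and then exploit the binomial theorem. Put $m=(p-1)/2$. Since $\frac{p-1}2-j\equiv-\frac{2j+1}2\pmod p$, one has $\binom{2k}k\equiv(-4)^k\binom mk\pmod p$, hence $\frac{\binom{2k}k}{2^k}\equiv(-2)^k\binom mk\pmod p$ and
\[
\sum_{k=1}^{(p-1)/2}\frac{\binom{2k}k}{2^k}H_k^2\equiv\sum_{k=1}^{m}(-2)^k\binom mk H_k^2\pmod p .
\]
The right-hand side is now a genuine finite binomial sum, which is the form I want to work with.

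To handle the weight $H_k^2$ I would linearize it through the classical finite identities $H_k^2=2\sum_{i=1}^k\binom ki\frac{(-1)^{i-1}}{i^2}-H_k^{(2)}$ and $H_k^{(2)}=\sum_{i=1}^k i^{-2}$, substitute, and interchange the order of summation. The inner $k$-sums then collapse by the binomial theorem, the crucial evaluation being
\[
\sum_{k=i}^{m}(-2)^k\binom mk\binom ki=(-1)^m2^i\binom mi ,
\]
so that the factor $(-2)^i\binom mi$ reappears and is again congruent to $\binom{2i}i/2^i$ modulo $p$. In this way the problem is reduced to the two companion weight-two sums
\[
\sum_{i=1}^{m}\frac{\binom{2i}i}{2^ii^2}\qquad\text{and}\qquad\sum_{k=1}^{m}\frac{\binom{2k}k}{2^k}H_k^{(2)} ,
\]
together with the elementary quantities $H_m$ and $H_m^{(2)}$.

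The final step evaluates everything modulo $p$ by standard congruences: $H_m\equiv-2q_p(2)\pmod p$ introduces the Fermat quotient and, through the squared contributions, the term $(-1)^{(p-1)/2}q_p(2)^2$, while $H_m^{(2)}\equiv0\pmod p$ removes one branch; the remaining weight-two binomial sum is exactly where the Euler number enters, being congruent to an expression in $E_{p-3}$. Combining the pieces and simplifying the signs should yield $(-1)^{(p-1)/2}q_p(2)^2-E_{p-3}$. I expect the main obstacle to be precisely this last input: the clean mod-$p$ evaluation of the weight-two binomial sums that produce $E_{p-3}$, and the careful bookkeeping of every Fermat-quotient contribution so that the coefficient of $q_p(2)^2$ comes out exactly $(-1)^{(p-1)/2}$. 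In the intermediate steps this may force one to expand $H_m$ and $\binom{2i}i$ one power of $p$ further than the final modulus, even though the stated congruence is only modulo $p$.
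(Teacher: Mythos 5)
Your reduction steps are all correct as identities: the transfer $\binom{2k}k/2^k\equiv(-2)^k\binom mk\pmod p$ with $m=(p-1)/2$, the linearization $H_k^2=2\sum_{i=1}^k\binom ki\frac{(-1)^{i-1}}{i^2}-H_k^{(2)}$, and the collapse $\sum_{k=i}^m\binom mk\binom ki(-2)^k=(-1)^m2^i\binom mi$ all check out, and they yield
\[
\sum_{k=1}^m\binom mk(-2)^kH_k^2=2(-1)^{m-1}\sum_{i=1}^m\binom mi\frac{(-2)^i}{i^2}-\sum_{k=1}^m\binom mk(-2)^kH_k^{(2)}.
\]
Your plan is also arithmetically consistent with the target: the theorem would follow if you could establish the two mod $p$ evaluations
\[
\sum_{i=1}^{m}\frac{\binom{2i}i}{2^ii^2}\equiv-\frac12q_p(2)^2+(-1)^{m}E_{p-3}\pmod p
\qquad\text{and}\qquad
\sum_{k=1}^{m}\frac{\binom{2k}k}{2^k}H_k^{(2)}\equiv-E_{p-3}\pmod p.
\]

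The genuine gap is that these two congruences are not ``standard'' facts you can quote; they carry the entire content of the theorem. First, note that $H_m^{(2)}\equiv0\pmod p$ does \emph{not} ``remove'' the $H_k^{(2)}$ branch: that sum is congruent to $-E_{p-3}$, not to $0$, and discarding it would produce $(-1)^{m}q_p(2)^2-2E_{p-3}$, which is wrong. Second, the Euler number does not enter either residual sum by a routine citation: in any derivation it comes from Z.-H. Sun's mod $p^2$ congruences for quarter-truncated harmonic sums, e.g. $\sum_{k=1}^{m}\frac{(-1)^k}k\equiv-q_p(2)+\frac p2q_p(2)^2-p(-1)^{m}E_{p-3}\pmod{p^2}$, combined with a Wolstenholme-type argument via $(-1)^{k-1}\binom{p-1}{k-1}\equiv1-pH_{k-1}\pmod{p^2}$ and $\sum_{k=1}^{m}\binom pk=2^{p-1}-1$ (your closing remark that one must expand ``one power of $p$ further than the final modulus'' is exactly this point, but anticipating the difficulty is not the same as resolving it). This is precisely the work the paper does: it performs the same transfer to $\sum\binom mk(-2)^kH_k^2$, uses an induction identity (its Lemma 2.1) to express that sum through $\sum(-1)^k/k$, $\sum(-1)^k/k^2$ and $\sum(-1)^kH_k/k$, and then proves the key congruence $\sum_{k=1}^{m}\frac{(-1)^kH_k}k\equiv\frac12q_p(2)^2+(-1)^{m}E_{p-3}\pmod p$ as its Lemma 2.2. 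Your two residual sums can indeed be evaluated, but only by routing them back through these same alternating-sum lemmas; for instance one can show
\[
\sum_{k=1}^m\binom mk(-2)^kH_k^{(2)}=(-1)^m\Bigl(\tfrac12\bigl(A_m^2+H_m^{(2)}\bigr)-\sum_{j=1}^m\frac{(-1)^jH_j}j\Bigr),
\qquad A_m=\sum_{j=1}^m\frac{(-1)^j}j,
\]
which reduces your second sum to exactly the paper's Lemma 2.2. As it stands, your proposal correctly rearranges the problem into an equivalent one, but the step where $E_{p-3}$ is actually produced is missing.
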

\begin{remark}\rm In view of \cite[Remark 1.2]{sun-ijm-2015}, sun said that he was able to show that
$$
\sum_{k=0}^{p-1}\frac{\binom{2k}k}{2^k}H_k^2\equiv\frac12(-1)^{(p-1)/2}q_p(2)^2-\frac12E_{p-3}\pmod p.
$$
But he made a typing error, because by (\ref{hkhk}), we have
$$
\sum_{k=0}^{p-1}\frac{\binom{2k}k}{2^k}H_k^2\equiv(-1)^{(p-1)/2}q_p(2)^2-E_{p-3}\pmod p.
$$
\end{remark}
Recall that the Euler polynomials $\{E_n(x)\}$ are defined by
$$\frac{2e^{xt}}{e^{t}+1}=\sum_{n=0}^\infty E_n(x)\frac{t^n}{n!}\ (|t|<\pi).$$
We also obtain the following result which was also conjectured by Z.-W. Sun:
\begin{thm}\label{Thsun1} Let $p>3$ be a prime. Then
\begin{equation}
{}_4F_3\bigg[\begin{matrix} \frac76&\frac12&\frac12&\frac12\\ &\frac16&1&1\end{matrix}\bigg|-\frac18\bigg]_{p-1}\equiv p\left(\frac{-2}p\right)+\frac{p^3}{16}E_{p-3}\left(\frac14\right)\pmod{p^4}.
\end{equation}
\end{thm}
\begin{remark}\rm This congruence was the conjecture of Z.-W. Sun \cite[(2.16)]{sun-numb-2019} with $n=1$.

\end{remark}
Our main tool in this paper is the WZ method. We shall prove Theorems \ref{Thhar} and \ref{Thsun} in Sections 2 and 3, respectively. And the last Section is devoted to proving Theorem \ref{Thsun1}. We can see that Long \cite{long-2011-pjm} and Chen, Xie and He \cite{CXH-rama-2016} only proved the congruence in Theorem \ref{Thsun} modulo $p^2$, so the congruence in Theorem \ref{Thsun} is intrinsically difficult. And the WZ method is far from enough, we should solve numerous intermediate obstacles. We also empoly the summation package \verb"Sigma"\cite{S} based on difference ring/field algorithms in order to derive some non-trivial harmonic number identities. The arguments are not just $-8$ and $-\frac18$, there is also $\frac14$, and if the items $\frac76$ and $\frac16$ are replaced by other suitable items, the arguments can be $-1$, $\frac1{64}$ and so on, which were studied in other papers. And why the error term is Euler number or Euler polynomial or Bernoulli number or Bernoulli polynomial, that's because these supercongruences are $p$-adic analogues of Ramanujan $\frac1{\pi}$ series, and these series may relate to the $L$ function. We know that the value of the $L$ function at some negative intergers is related to the Bernoulli number with character. The Bernoulli number with character $\left(\frac{-1}p\right)$ is Euler number, the Bernoulli number with character $\left(\frac{-2}p\right)$ is the value of Euler polynomial at $\frac14$, and the Bernoulli number with character $\left(\frac{-3}p\right)$ can be used to represent the value of Bernoulli polynomial at $\frac13$. At last, we want to explain that although the WZ method is used in other papers, but new methods and innovations are needed to solve obstacles in the middle, because the WZ method is a large framework for us to confirm supercongruences similar to the circle method and the sieve method in Analytic Number Theory, of course, it maybe not that powerful.
\section{Proof of Theorem \ref{Thhar}}
\begin{lem}\label{Lemhkhk} For any positive integer $n$, we have
\begin{align}\label{nkhkhk}
\sum_{k=1}^n\binom{n}{k}(-2)^kH_k^2=&(-1)^n\left(\frac{H_n^{(2)}}2+H_n^2-2\sum_{k=1}^n\frac{(-1)^k}{k^2}-2H_n\sum_{k=1}^n\frac{(-1)^k}k\right)\notag\\
&+(-1)^n\left(-\frac{\left(\sum_{k=1}^n\frac{(-1)^k}k\right)^2}2+3\sum_{k=1}^n\frac{(-1)^kH_k}k\right)
\end{align}
and
\begin{align}\label{nkhk}
\sum_{k=1}^n\binom{n}{k}(-2)^kH_k=(-1)^nH_n-(-1)^n\sum_{k=1}^n\frac{(-1)^k}k.
\end{align}
\end{lem}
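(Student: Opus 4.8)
The plan is to derive both identities from the integral representation $H_k=\int_0^1\frac{1-t^k}{1-t}\,dt$, which turns each binomial–harmonic sum into a plain binomial sum that collapses by the binomial theorem $\sum_{k=1}^n\binom nk(-2t)^k=(1-2t)^n-1$. The point is that $H_k$ (and, squared, $H_k^2$) can be pulled out of the discrete sum and into an integral, after which the only $k$-dependence lives in powers $x^k$ that the binomial theorem evaluates in closed form.

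First I would settle the linear identity (\ref{nkhk}). Substituting the integral representation and interchanging the finite sum with the integral gives $\sum_{k=1}^n\binom nk(-2)^kH_k=\int_0^1\frac{(-1)^n-(1-2t)^n}{1-t}\,dt$, where the a~priori divergent pieces cancel because the numerator vanishes at $t=1$. The substitution $u=1-2t$ recasts this as $\int_{-1}^1\frac{(-1)^n-u^n}{1+u}\,du$; dividing out the factor $1+u$ leaves a polynomial in $u$ whose odd-power terms integrate to zero over $[-1,1]$, while the even-power terms sum to $2(-1)^n\sum_{\ell\ \mathrm{odd}}1/\ell=(-1)^n\left(H_n-\sum_{k=1}^n(-1)^k/k\right)$, which is exactly the claimed right-hand side.

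For the quadratic identity (\ref{nkhkhk}) I would square the representation, writing $H_k^2=\int_0^1\!\!\int_0^1\frac{(1-s^k)(1-t^k)}{(1-s)(1-t)}\,ds\,dt$, interchange with the sum, and expand $(1-s^k)(1-t^k)=1-s^k-t^k+(st)^k$. The binomial theorem collapses the four inner sums into the integrand $\frac{(-1)^n-(1-2s)^n-(1-2t)^n+(1-2st)^n}{(1-s)(1-t)}$, whose numerator vanishes as $s\to1$ or $t\to1$ so that the double integral converges. The coupled term $(1-2st)^n$ is the crux: after the substitutions $u=1-2s$ and $v=1-2t$ the integrand becomes rational, and successive integration produces logarithms and then dilogarithmic values that I would expand as the truncated alternating sums $\sum_{k=1}^n(-1)^k/k$, $\sum_{k=1}^n(-1)^k/k^2$ and $\sum_{k=1}^n(-1)^kH_k/k$ occurring on the right.

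The main obstacle is managing this evaluation so that every one of the six terms on the right of (\ref{nkhkhk})—with the exact rational coefficients $1/2$ on $H_n^{(2)}$ and on the two squared sums, and $3$ on $\sum_{k=1}^n(-1)^kH_k/k$—appears with the correct constant. The difficulty is sharpened by the fact that the integrand cannot be split term by term without creating spurious divergences at $s,t\to1$: one must group the pieces so that each partial integrand still vanishes on that boundary, for instance pairing $(1-2st)^n-(1-2s)^n$ against $(1-2t)^n-(-1)^n$. A cleaner-looking but in practice more fragile alternative is induction on $n$, clearing the factor $(-1)^n$ and tracking how the replacement $H_{k-1}=H_k-1/k$ perturbs the quadratic sum under the Pascal recurrence $\binom{n+1}{k}=\binom nk+\binom n{k-1}$; I would nonetheless keep the integral evaluation as the primary route, using (\ref{nkhk}) and the single-variable integrals above as the building blocks.
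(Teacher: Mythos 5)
Your proof of (\ref{nkhk}) is complete and correct: the representation $H_k=\int_0^1\frac{1-t^k}{1-t}\,dt$, the binomial collapse, the substitution $u=1-2t$, and the parity evaluation over $[-1,1]$ do yield $(-1)^nH_n-(-1)^n\sum_{k=1}^n\frac{(-1)^k}{k}$. This is also a genuinely different route from the paper, whose entire proof of the lemma is the one-line remark that both identities ``can be easily proved by induction.''

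The quadratic identity (\ref{nkhkhk}), however, is where the substance of the lemma lies (it is the input to Theorem \ref{Thhar}), and there your proposal stops at the setup. The double-integral representation
\[
\sum_{k=1}^n\binom nk(-2)^kH_k^2=\int_0^1\!\!\int_0^1\frac{(-1)^n-(1-2s)^n-(1-2t)^n+(1-2st)^n}{(1-s)(1-t)}\,ds\,dt
\]
is correct, but everything after it is written in the conditional: you say the coupled term $(1-2st)^n$ ``produces logarithms and then dilogarithmic values'' that you ``would expand'' into the truncated sums on the right, and you yourself name the recovery of the six terms with their exact coefficients $\frac12$, $2$, $2$, $\frac12$, $3$ as ``the main obstacle''---without resolving it. That coefficient bookkeeping is not a routine afterthought; it is the entire content of the identity, so as written (\ref{nkhkhk}) remains unproven. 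There is also a structural inaccuracy in the sketch: the numerator vanishes identically on $s=1$ and on $t=1$, hence is divisible by $(1-s)(1-t)$ in $\mathbb{Q}[s,t]$, so the integrand is a \emph{polynomial} and the double integral is an elementary rational computation; no genuine logarithms or dilogarithms can survive, and they (together with the divergences you worry about) arise only if you split the integrand into the non-polynomial pieces you propose. A concrete way to finish along your own lines: write the numerator as $\sum_{j=1}^n\binom nj2^j(1-t)^j\bigl(s^j(1-2s)^{n-j}-(-1)^{n-j}\bigr)$, so that dividing by $(1-s)(1-t)$ and integrating in $t$ (each term contributes $1/j$) reduces everything to one-variable polynomial integrals $\int_0^1\frac{s^j(1-2s)^{n-j}-(-1)^{n-j}}{1-s}\,ds$, which can then be matched against the right-hand side; alternatively, carry out the induction on $n$ that you relegate to a remark, which is exactly the paper's (unwritten) argument. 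Until one of these is actually done, your proposal establishes only the easier half of the lemma.
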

\begin{proof}
These two identities can be found and proved by summation package \verb"Sigma". For the details of using \verb"Sigma" to find and prove identities, we may refer the readers to \cite{S,MWW,L,M}.
\end{proof}
\begin{lem}\label{Lemhk} Let $p>3$ be a prime. Then
$$
\sum_{k=1}^{(p-1)/2}\frac{(-1)^kH_k}k\equiv\frac12q_p(2)^2+(-1)^{(p-1)/2}E_{p-3}\pmod p.
$$
\end{lem}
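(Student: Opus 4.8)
The plan is to work modulo $p$ throughout and reduce the target $S:=\sum_{k=1}^{(p-1)/2}\frac{(-1)^kH_k}{k}$ to a small number of one–dimensional alternating harmonic sums whose residues are controlled by $q_p(2)$ and $E_{p-3}$. Write $m=(p-1)/2$ and use the standard congruences $H_m\equiv-2q_p(2)$, $H_{\lfloor p/4\rfloor}\equiv-3q_p(2)$, and, from Wolstenholme \eqref{hp-1}, $H_{p-1}^{(2)}\equiv0$, whence $H_m^{(2)}\equiv0\pmod p$. The two quantities $q_p(2)^2$ and $E_{p-3}$ on the right-hand side suggest that $S$ should split as a ``Fermat-quotient square'' part plus an ``Euler-number'' part, and the reflection $k\mapsto p-k$ is what will separate them.

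First I would record the two scalar building blocks. Splitting $\sum_{k=1}^{m}\frac{(-1)^{k-1}}{k}$ into even and odd indices, writing the even part as $\sum_{2i\le m}\frac{1}{2i}=\tfrac12H_{\lfloor p/4\rfloor}\equiv-\tfrac32q_p(2)$ and combining with $H_m\equiv-2q_p(2)$, gives $\sum_{k=1}^{m}\frac{(-1)^k}{k}\equiv-q_p(2)\pmod p$. For the quadratic block I would pass to power sums: since $k^{-2}\equiv k^{\,p-3}\pmod p$, the sum becomes an alternating power sum, which I evaluate by the Euler–polynomial telescoping $\sum_{k=1}^{n}(-1)^kk^{s}=\tfrac12\big((-1)^nE_s(n+1)-E_s(1)\big)$. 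Here $E_{p-3}(1)=-E_{p-3}(0)=0$ because $E_{p-3}(0)$ is a rational multiple of the vanishing odd Bernoulli number $B_{p-2}$, while $E_{p-3}(\tfrac{p+1}{2})\equiv E_{p-3}(\tfrac12)=E_{p-3}/2^{\,p-3}\equiv 4E_{p-3}\pmod p$. This yields $\sum_{k=1}^{m}\frac{(-1)^k}{k^2}\equiv 2(-1)^{m}E_{p-3}\pmod p$, which already contains the Euler-number term.

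Next I would use the reflection $k\mapsto p-k$ to pass between the half-range and the full range. Substituting $k=p-k'$ in $\sum_{k=m+1}^{p-1}\frac{(-1)^kH_k}{k}$ and using $H_{p-k'}\equiv H_{k'-1}=H_{k'}-\tfrac1{k'}$ shows that this upper half is congruent to $S-\sum_{k=1}^{m}\frac{(-1)^k}{k^2}$. Hence, setting $S_{\mathrm{full}}:=\sum_{k=1}^{p-1}\frac{(-1)^kH_k}{k}$, I obtain $S\equiv\tfrac12\big(S_{\mathrm{full}}+\sum_{k=1}^{m}\frac{(-1)^k}{k^2}\big)\pmod p$. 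Combined with the two building blocks, the whole lemma now reduces to the single claim $S_{\mathrm{full}}\equiv q_p(2)^2\pmod p$, since then $S\equiv\tfrac12\big(q_p(2)^2+2(-1)^{m}E_{p-3}\big)=\tfrac12q_p(2)^2+(-1)^{(p-1)/2}E_{p-3}$.

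The main obstacle is exactly this last claim. Writing $S_{\mathrm{full}}=\sum_{1\le j\le k\le p-1}\frac{(-1)^k}{jk}$ and peeling off the diagonal $\sum_k\frac{(-1)^k}{k^2}\equiv0$, one is left with $U:=\sum_{1\le j<k\le p-1}\frac{(-1)^k}{jk}$. The difficulty is that the quasi-shuffle relation coming from $\big(\sum_{k}\frac{(-1)^k}{k}\big)^2$ and the reflection $j\mapsto p-j,\ k\mapsto p-k$ \emph{both} collapse to the single identity $U+V\equiv0$, where $V:=\sum_{j<k}\frac{(-1)^j}{jk}$ is the companion sum; neither isolates $U$. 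To break the symmetry I would compute the antisymmetric combination $U-V$ directly, separating the pairs $(j,k)$ by the parities of $j$ and $k$: the like-parity blocks reassemble, after the substitutions $j=2a,k=2b$ and the reflection, into $(\sum_{k\le m}\tfrac1k)^2$ and $(\sum_{k\ \mathrm{odd}}\tfrac1k)^2$, both $\equiv q_p(2)^2$ up to constants, while the mixed-parity block is the genuinely depth-two piece and is reduced to a further power-sum evaluation. This is the technical heart of the argument, and the cleanest route may well be to invoke the known finite multiple-harmonic-sum congruence $\sum_{k=1}^{p-1}\frac{(-1)^kH_k}{k}\equiv q_p(2)^2\pmod p$ outright, after which the three evaluations assemble immediately to the stated congruence.
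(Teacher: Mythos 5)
Your reductions are all correct, and they organize the proof differently from the paper. The reflection $k\mapsto p-k$ (with $H_{p-k}\equiv H_{k-1}\pmod p$) does give
$$
S\equiv\frac12\left(S_{\mathrm{full}}+\sum_{k=1}^{m}\frac{(-1)^k}{k^2}\right)\pmod p,\qquad m=\frac{p-1}2,
$$
and your Euler-polynomial telescoping $\sum_{k=1}^{n}(-1)^kk^{s}=\frac12\left((-1)^nE_s(n+1)-E_s(1)\right)$, together with $E_{p-3}(1)=0$ (via the vanishing of $B_{p-2}$) and $E_{p-3}\left(\frac{p+1}2\right)\equiv E_{p-3}\left(\frac12\right)=2^{3-p}E_{p-3}\equiv4E_{p-3}\pmod p$, correctly yields $\sum_{k=1}^{m}\frac{(-1)^k}{k^2}\equiv2(-1)^{m}E_{p-3}\pmod p$, which is the paper's (\ref{-1k2}) proved from scratch rather than quoted. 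By contrast, the paper never leaves the half range: it writes $pH_{k-1}\equiv1-(-1)^{k-1}\binom{p-1}{k-1}\pmod{p^2}$, converts $\sum_{k\le m}\frac{(-1)^k}kH_{k-1}$ into $\frac1p\left(\sum_{k\le m}\frac{(-1)^k}k+\frac1p\sum_{k\le m}\binom pk\right)$, evaluates $\sum_{k\le m}\binom pk=2^{p-1}-1$, and then extracts \emph{both} the $q_p(2)^2$ and the $E_{p-3}$ terms from Z.-H. Sun's mod-$p^2$ evaluation of $H_{\lfloor p/4\rfloor}-H_{(p-1)/2}$ (the paper's (\ref{-1k}), which despite its ``$\bmod\ p$'' typo is used modulo $p^2$). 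Your scheme instead isolates the Euler number in the elementary sum $\sum(-1)^k/k^2$ and pushes all of the $q_p(2)^2$ content into the full-range sum $S_{\mathrm{full}}$. (Incidentally, your first building block $\sum_{k\le m}\frac{(-1)^k}k\equiv-q_p(2)$ is never used in the final assembly.)

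The gap is that the proposal never proves the one statement carrying all the arithmetic content, namely $S_{\mathrm{full}}=\sum_{k=1}^{p-1}\frac{(-1)^kH_k}{k}\equiv q_p(2)^2\pmod p$. Your attempted direct attack (the $U-V$ parity decomposition) is left unfinished, and as described it is not convincing: you note yourself that shuffle and reflection only give $U+V\equiv0$, and the ``mixed-parity block'' that you defer to ``a further power-sum evaluation'' is precisely a depth-two alternating sum of the same difficulty as the one you started with, so nothing has been gained. Falling back on ``invoke the known congruence outright'' is acceptable only with a genuine citation (the congruence is indeed in the literature on alternating multiple harmonic sums, and is equivalent to results of Z.-H. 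Sun). Note, however, that the shortest complete proof of it is exactly the paper's engine applied to the full range, where it is even easier: summing $pH_{k-1}\equiv1-(-1)^{k-1}\binom{p-1}{k-1}\pmod{p^2}$ against $\frac{(-1)^k}k$ gives
$$
p\sum_{k=1}^{p-1}\frac{(-1)^kH_{k-1}}k\equiv\sum_{k=1}^{p-1}\frac{(-1)^k}k+\frac1p\sum_{k=1}^{p-1}\binom pk\equiv\left(-2q_p(2)+pq_p(2)^2\right)+2q_p(2)\pmod{p^2},
$$
using $\sum_{k=1}^{p-1}\binom pk=2^p-2$ and $\sum_{k=1}^{p-1}\frac{(-1)^k}k\equiv H_{(p-1)/2}\equiv-2q_p(2)+pq_p(2)^2\pmod{p^2}$ (Wolstenholme plus the classical expansion of $H_{(p-1)/2}$, which the paper also uses in Section 4); then $S_{\mathrm{full}}\equiv\sum_{k=1}^{p-1}\frac{(-1)^kH_{k-1}}k\equiv q_p(2)^2\pmod p$ because $\sum_{k=1}^{p-1}\frac{(-1)^k}{k^2}\equiv\frac12H_{(p-1)/2}^{(2)}\equiv0\pmod p$. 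So your reduction relocates the difficulty rather than removing it; to close the proof you must either supply this binomial-coefficient argument (which is the paper's key identity) or cite the full-range congruence explicitly.
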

\begin{proof}
We know that $(-1)^{k-1}\binom{p-1}{k-1}\equiv1-pH_{k-1}\pmod{p^2}$, so
\begin{align*}
p\sum_{k=1}^{(p-1)/2}\frac{(-1)^k}kH_{k-1}&\equiv\sum_{k=1}^{(p-1)/2}\frac{(-1)^k}k\left(1-(-1)^{k-1}\binom{p-1}{k-1}\right)\\
&=\sum_{k=1}^{(p-1)/2}\frac{(-1)^k}k+\frac1p\sum_{k=1}^{(p-1)/2}\binom{p}k\pmod{p^2}.
\end{align*}
And it is easy to see that
$$
\sum_{k=1}^{(p-1)/2}\binom{p}k=\sum_{k=(p+1)/2}^{p-1}\binom{p}{p-k}=\sum_{k=(p+1)/2}^{p-1}\binom{p}{k}.
$$
Thus,
$$
\sum_{k=1}^{(p-1)/2}\binom{p}k=\frac12\left(\sum_{k=1}^{(p-1)/2}\binom{p}k+\sum_{k=(p+1)/2}^{p-1}\binom{p}{k}\right)=\frac12\sum_{k=1}^{p-1}\binom{p}{k}=2^{p-1}-1.
$$
In view of \cite[Theorem 3.2]{sun-jnt-2008}, we have, for any prime $p>3$
\begin{align}\label{-1k}
\sum_{k=1}^{(p-1)/2}\frac{(-1)^k}k=H_{\lfloor p/4\rfloor}-H_{(p-1)/2}\equiv-q_p(2)+\frac12q_p(2)^2-(-1)^{(p-1)/2}E_{p-3}\pmod p.
\end{align}
Hence
$$
\sum_{k=1}^{(p-1)/2}\frac{(-1)^k}kH_{k-1}\equiv\frac1p\left(\sum_{k=1}^{(p-1)/2}\frac{(-1)^k}k+q_p(2)\right)\equiv\frac12q_p(2)^2-(-1)^{(p-1)/2}E_{p-3}\pmod{p^2}.
$$
By \cite[Theorem 3.5]{sun-jnt-2008}, we know that for any prime $p>5$,
\begin{equation}\label{-1k2}
\sum_{k=1}^{(p-1)/2}\frac{(-1)^k}{k^2}=\frac12H_{\lfloor p/4\rfloor}^{(2)}-H_{(p-1)/2}^{(2)}\equiv2(-1)^{(p-1)/2}E_{p-3}\pmod p.
\end{equation}
Therefore, for any prime $p>5$, we have
$$
\sum_{k=1}^{(p-1)/2}\frac{(-1)^k}kH_{k}\equiv\frac12q_p(2)^2+(-1)^{(p-1)/2}E_{p-3}\pmod p.
$$
The $p=5$ case can be checked directly. Now we finish the proof of Lemma \ref{Lemhk}.
\end{proof}
\noindent{\it Proof of (\ref{hkhk})}. In light of \cite[Theorems 3.2,3.5]{sun-jnt-2008}, we have
$$H_{(p-1)/2}^{(2)}\equiv0\pmod p\ \ \mbox{and}\ \ H_{(p-1)/2}\equiv-2q_p(2)\pmod p.$$
These, with Lemma \ref{Lemhkhk}, (\ref{-1k}), (\ref{-1k2}) and Lemma \ref{Lemhk} yield that
$$\sum_{k=1}^{(p-1)/2}\binom{(p-1)/2}{k}(-2)^kH_k^2\equiv(-1)^{(p-1)/2}q_p(2)^2-E_{p-3}\pmod p.$$
Therefore the proof of (\ref{hkhk}) is completed with $\binom{2k}k\equiv\binom{(p-1)/2}k(-4)^k\pmod p$.\qed
\section{Proof of Theorem \ref{Thsun}}
\qquad We will use the following WZ pair which appears in \cite{CXH-rama-2016} to prove Theorem \ref{Thsun}. For nonnegative integers $n, k$, define
$$
F(n,k)=(-1)^{n+k}\frac{(6n-2k+1)}{2^{9n-3k}}\frac{(2n+2k)!(2n-2k)!\binom{2n-2k}{n-k}}{(n+k)!(n-k)!n!^2}
$$
and
$$
G(n,k)=(-1)^{n+k}\frac{n^2(2n+2k)!(2n-2k)!\binom{2n-2k}{n-k}}{2^{9n-3k-4}(2n+2k-1)(n+k)!(n-k)!n!^2}.
$$
Clearly $F(n,k)=G(n,k)=0$ if $n<k$. It is easy to check that
\begin{equation}\label{FG}
F(n,k-1)-F(n,k)=G(n+1,k)-G(n,k)
\end{equation}
for all nonnegative integer $n$ and $k>0$.

Summing (\ref{FG}) over $n$ from $0$ to $(p-1)/2$ we have
$$
\sum_{n=0}^{(p-1)/2}F(n,k-1)-\sum_{n=0}^{(p-1)/2}F(n,k)=G\left(\frac{p+1}2,k\right)-G(0,k)=G\left(\frac{p+1}2,k\right).
$$
Furthermore, summing both side of the above identity over $k$ from $1$ to $(p-1)/2$, we obtain
\begin{align}\label{wz1}
\sum_{n=0}^{(p-1)/2}F(n,0)=F\left(\frac{p-1}2,\frac{p-1}2\right)+\sum_{k=1}^{(p-1)/2}G\left(\frac{p+1}2,k\right).
\end{align}
\begin{lem}\label{mor}{\rm (\cite{Mor})} For any prime $p>3$, we have
$$
\binom{p-1}{(p-1)/2}\equiv(-1)^{(p-1)/2}4^{p-1}\pmod{p^3}.
$$
\end{lem}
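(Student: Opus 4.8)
The plan is to evaluate the binomial coefficient directly as a product and to compare it with $4^{p-1}=(1+pq_p(2))^2$ through the Fermat quotient $q_p(2)$. Write $m=(p-1)/2$. Since
$$\binom{p-1}{m}=\frac{(p-1)(p-2)\cdots(p-m)}{m!}=\prod_{j=1}^m\frac{p-j}{j}=(-1)^m\prod_{j=1}^m\left(1-\frac pj\right),$$
I would expand the product $p$-adically and reduce modulo $p^3$. The coefficients of $p$ and $p^2$ are (up to sign) the first two elementary symmetric functions of $1/1,\dots,1/m$, which by Newton's identities equal $H_m$ and $\tfrac12(H_m^2-H_m^{(2)})$; hence
$$\binom{p-1}{m}\equiv(-1)^m\left(1-pH_m+\frac{p^2}2\left(H_m^2-H_m^{(2)}\right)\right)\pmod{p^3}.$$
Because $(-1)^m=(-1)^{(p-1)/2}$, proving the lemma reduces to the scalar congruence
$$1-pH_m+\frac{p^2}2\left(H_m^2-H_m^{(2)}\right)\equiv 4^{p-1}\pmod{p^3}.$$

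Next I would feed in the half-range harmonic-number evaluations in terms of $q_p(2)$. As already recorded in the proof of (\ref{hkhk}), one has $H_m^{(2)}\equiv0\pmod p$, so $\tfrac{p^2}2H_m^{(2)}\equiv0\pmod{p^3}$ and that term disappears. The remaining ingredient is the sharper congruence
$$H_m=H_{(p-1)/2}\equiv -2q_p(2)+p\,q_p(2)^2\pmod{p^2},$$
which refines the relation $H_{(p-1)/2}\equiv-2q_p(2)\pmod p$ used earlier and is available from \cite[Theorem 3.2]{sun-jnt-2008}. Since $H_m$ enters with a factor $p$ I only need it modulo $p^2$, and since $H_m^2$ enters with a factor $p^2$ I only need $H_m$ modulo $p$ there; the displayed congruence supplies both.

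Substituting gives $-pH_m\equiv 2pq_p(2)-p^2q_p(2)^2$ and $\tfrac{p^2}2H_m^2\equiv \tfrac{p^2}2\cdot4q_p(2)^2=2p^2q_p(2)^2$ modulo $p^3$, so the left-hand side collapses to $1+2pq_p(2)+p^2q_p(2)^2=(1+pq_p(2))^2$. On the other hand $2^{p-1}=1+pq_p(2)$ by the definition of $q_p(2)$, whence $4^{p-1}=(1+pq_p(2))^2$, and the two sides agree modulo $p^3$, completing the plan. The only genuinely nontrivial input is the mod-$p^2$ expansion of $H_{(p-1)/2}$, which I would quote from \cite{sun-jnt-2008}; the vanishing $H_{(p-1)/2}^{(2)}\equiv0\pmod p$ follows easily by pairing $1/k^2$ with $1/(p-k)^2$ and invoking Wolstenholme's congruence $H_{p-1}^{(2)}\equiv0\pmod p$ from (\ref{hp-1}). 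Everything else is the symmetric-function bookkeeping of the product expansion, so I expect that sharpening $H_{(p-1)/2}\equiv-2q_p(2)\pmod p$ to a congruence valid modulo $p^2$ (or citing it) will be the main, and essentially the only, obstacle.
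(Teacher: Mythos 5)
You have proved the lemma correctly, but by a genuinely different route from the paper: the paper gives no proof at all, it simply quotes this as Morley's classical congruence from \cite{Mor}. Your argument is the standard elementary derivation, and every step checks out: the product expansion $\binom{p-1}{(p-1)/2}=(-1)^{(p-1)/2}\prod_{j=1}^{(p-1)/2}\bigl(1-\frac pj\bigr)$, the reduction modulo $p^3$ to $(-1)^{(p-1)/2}\bigl(1-pH_{(p-1)/2}+\frac{p^2}2\bigl(H_{(p-1)/2}^2-H_{(p-1)/2}^{(2)}\bigr)\bigr)$ via the first two elementary symmetric functions, the removal of the $H^{(2)}$ term using $H_{(p-1)/2}^{(2)}\equiv0\pmod p$, and the final collapse to $1+2pq_p(2)+p^2q_p(2)^2=(1+pq_p(2))^2=4^{p-1}$ using Lehmer's congruence $H_{(p-1)/2}\equiv-2q_p(2)+pq_p(2)^2\pmod{p^2}$. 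Notably, the two nontrivial inputs you invoke are already used by the paper elsewhere: the mod-$p^2$ expansion of $H_{(p-1)/2}$ appears verbatim in the proof of Lemma \ref{Lemp+12}, and $H_{(p-1)/2}^{(2)}\equiv0\pmod p$ in the proof of (\ref{hkhk}), both drawn from \cite{sun-jnt-2008}; so your proof costs no new machinery. The one point worth stating explicitly is non-circularity: you must know that the cited mod-$p^2$ congruence for $H_{(p-1)/2}$ is established in \cite{sun-jnt-2008} independently of Morley's congruence (it is — Z.-H. Sun derives it from Bernoulli-number congruences), otherwise the argument would be vacuous. In short, the paper's citation buys brevity and defers to the 1895 original, while your derivation buys a transparent, self-contained proof assembled from ingredients the paper needs anyway.
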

\begin{lem} \label{Fp1p1} Let $p>3$ be a prime. Then
$$F\left(\frac{p-1}2,\frac{p-1}2\right)\equiv (-1)^{(p-1)/2}p\left(1-pq_p(2)+p^2q_p(2)^2\right)\pmod{p^4}.$$
\end{lem}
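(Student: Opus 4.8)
The plan is to evaluate $F\left(\frac{p-1}2,\frac{p-1}2\right)$ in closed form and then reduce it modulo $p^4$ using the two cited congruences. Write $m=(p-1)/2$, so that $p=2m+1$. Setting $n=k=m$ in the definition of $F(n,k)$ kills the sign, since $(-1)^{n+k}=(-1)^{2m}=1$, and gives $6n-2k+1=4m+1=2p-1$ and $9n-3k=6m=3(p-1)$, while the factors $(2n-2k)!$ and $\binom{2n-2k}{n-k}$ both become $1$ and $(n-k)!=1$. Collecting the surviving factorials I expect
\begin{equation*}
F\left(\tfrac{p-1}2,\tfrac{p-1}2\right)=\frac{2p-1}{8^{p-1}}\cdot\frac{(2p-2)!}{(p-1)!\,((p-1)/2)!^2}=\frac{2p-1}{8^{p-1}}\binom{2p-2}{p-1}\binom{p-1}{(p-1)/2},
\end{equation*}
where I have split the last fraction as $\frac{(2p-2)!}{(p-1)!(p-1)!}\cdot\frac{(p-1)!}{((p-1)/2)!^2}$.

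Next I would extract a factor of $p$ using the elementary identity $(2p-1)\binom{2p-2}{p-1}=p\binom{2p-1}{p-1}$ (which follows at once from $\binom{2p-1}{p-1}=\frac{2p-1}p\binom{2p-2}{p-1}$), rewriting the expression as
\begin{equation*}
F\left(\tfrac{p-1}2,\tfrac{p-1}2\right)=\frac{p}{8^{p-1}}\binom{2p-1}{p-1}\binom{p-1}{(p-1)/2}.
\end{equation*}
This is the crucial step: because of the explicit factor $p$, it now suffices to determine the remaining quantity $\binom{2p-1}{p-1}\binom{p-1}{(p-1)/2}/8^{p-1}$ only modulo $p^3$, which is exactly the precision at which the available tools hold.

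Finally I would apply $\binom{2p-1}{p-1}\equiv1\pmod{p^3}$ from \eqref{2p1p} together with Morley's congruence (Lemma~\ref{mor}) $\binom{p-1}{(p-1)/2}\equiv(-1)^{(p-1)/2}4^{p-1}\pmod{p^3}$. Since $4^{p-1}/8^{p-1}=1/2^{p-1}$ and $2^{p-1}=1+pq_p(2)$ by the definition of the Fermat quotient, the geometric expansion $1/(1+pq_p(2))\equiv1-pq_p(2)+p^2q_p(2)^2\pmod{p^3}$ (the cube term $p^3q_p(2)^3$ vanishing mod $p^3$) gives
\begin{equation*}
\frac{\binom{2p-1}{p-1}\binom{p-1}{(p-1)/2}}{8^{p-1}}\equiv(-1)^{(p-1)/2}\left(1-pq_p(2)+p^2q_p(2)^2\right)\pmod{p^3}.
\end{equation*}
Multiplying this congruence through by $p$ upgrades the modulus to $p^4$ and yields the claimed formula. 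I do not anticipate a genuine obstacle here: the only care needed is the bookkeeping of the factorials at the diagonal $n=k$ and the observation that pulling out the factor $p$ lines the required precision up precisely with the $p^3$ strength of Wolstenholme's/\eqref{2p1p} and Morley's congruences.
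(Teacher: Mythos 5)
Your proposal is correct and follows essentially the same route as the paper: both evaluate $F\left(\frac{p-1}2,\frac{p-1}2\right)$ as $\frac{p\binom{2p-1}{p-1}\binom{p-1}{(p-1)/2}}{2^{3p-3}}$ by pulling out the explicit factor of $p$, then apply the Wolstenholme-type congruence \eqref{2p1p}, Morley's congruence (Lemma~\ref{mor}), and the geometric expansion of $1/2^{p-1}=1/(1+pq_p(2))$ modulo $p^3$. No gaps; the bookkeeping and the precision argument match the paper's proof exactly.
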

\begin{proof} In view of the definition of $F(n,k)$, we have
\begin{align*}
F\left(\frac{p-1}2,\frac{p-1}2\right)=\frac{(2p-1)(2p-2)!}{2^{3p-3}(p-1)!\left(\frac{p-1}2\right)!^2}=\frac{(2p-1)\binom{2p-2}{p-1}\binom{p-1}{\frac{p-1}2}}{2^{3p-3}}=\frac{p\binom{2p-1}{p-1}\binom{p-1}{\frac{p-1}2}}{2^{3p-3}}.
\end{align*}
This, with (\ref{2p1p}), Lemma \ref{mor} and $2^{p-1}=1+pq_p(2)$ yields that
$$
F\left(\frac{p-1}2,\frac{p-1}2\right)\equiv(-1)^{(p-1)/2}\frac{p}{2^{p-1}}\equiv(-1)^{(p-1)/2}p\left(1-pq_p(2)+p^2q_p(2)^2\right)\pmod{p^4}.
$$
Therefore the proof of Lemma \ref{Fp1p1} is complete.
\end{proof}
By the definition of $G(n,k)$ we have
\begin{align*}
G(n,k)&=(-1)^{n+k}\frac{n^2(2n+2k)!(2n-2k)!\binom{2n-2k}{n-k}}{2^{9n-4-3k}(2n+2k-1)(n+k)!(n-k)!n!^2}\notag\\
&=(-1)^{n+k}\frac{\binom{2n+2k}{n+k}\binom{2n-2k}{n-k}^2(n+k)!(n-k)!}{2^{9n-4-3k}(2n+2k-1)(n-1)!^2}\notag\\
&=(-1)^{n+k}\frac{\left(\frac12\right)_{n+k}\left(\frac12\right)_{n-k}^2}{(2n+2k-1)(n-1)!^2(n-k)!2^{3n-k-4}}.
\end{align*}
It is easy to see that
$$
\frac{\left(\frac12\right)_{n+k}}{(n+k)!}=\frac{\binom{2n+2k}{n+k}}{4^{n+k}}, \ \ \left(\frac12\right)_{n+k}=\left(\frac12\right)_{n}\left(\frac12+n\right)_{k}
$$
and
$$
\left(\frac12\right)_{n-k}\left(\frac12+n-k\right)_{k-1}=\left(\frac12\right)_{n-1}.
$$
So we have
\begin{align}\label{Gpk}
G(n,k)&=(-1)^{n+k}\frac{\left(\frac12\right)_{n}\left(\frac12+n\right)_{k}\left(\frac12\right)_{n-1}^2}{(2n+2k-1)(n-1)!^2\left(\frac12+n-k\right)_{k-1}^2(n-k)!2^{3n-k-4}}\notag\\
&=\frac{(-1)^{n+k}}{2^{9n-k-8}}\frac{\binom{2n}n\binom{2n-2}{n-1}^2\left(\frac12+n\right)_{k}n!}{(2n+2k-1)\left(\frac12+n-k\right)_{k-1}^2(n-k)!}.
\end{align}
\begin{lem}\label{nk-2hk} For any prime $p>3$, we have
$$
\sum_{k=1}^{(p-1)/2}\binom{(p-1)/2}k(-2)^kH_k\equiv(-1)^{(p-1)/2}\left(-q_p(2)+\frac p2q_p(2)^2\right)+pE_{p-3}\pmod {p^2}.
$$
\end{lem}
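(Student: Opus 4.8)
The plan is to turn the binomial sum into harmonic sums by the elementary identity (\ref{nkhk}) and then read off the answer from the second-order harmonic congruences of Sun. First I would specialize (\ref{nkhk}) to $n=(p-1)/2$, which gives immediately
$$\sum_{k=1}^{(p-1)/2}\binom{(p-1)/2}{k}(-2)^kH_k=(-1)^{(p-1)/2}\left(H_{(p-1)/2}-\sum_{k=1}^{(p-1)/2}\frac{(-1)^k}{k}\right).$$
Thus the statement is equivalent to a congruence modulo $p^2$ for the combination $H_{(p-1)/2}-\sum_{k=1}^{(p-1)/2}(-1)^k/k$. Splitting the alternating sum by parity shows this combination is just $2\sum_{0<j\le(p-1)/2,\,j\ \mathrm{odd}}1/j$, a handy independent check on the sign conventions.

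Next I would substitute the two harmonic evaluations modulo $p^2$. From \cite[Theorem 3.2]{sun-jnt-2008} one has
$$H_{(p-1)/2}\equiv-2q_p(2)+pq_p(2)^2\pmod{p^2},$$
while the alternating sum, in the $\bmod\ p^2$ form of (\ref{-1k}), is
$$\sum_{k=1}^{(p-1)/2}\frac{(-1)^k}{k}\equiv-q_p(2)+\frac p2q_p(2)^2-(-1)^{(p-1)/2}pE_{p-3}\pmod{p^2}.$$
Subtracting the second congruence from the first and then multiplying through by $(-1)^{(p-1)/2}$ produces $(-1)^{(p-1)/2}\left(-q_p(2)+\frac p2q_p(2)^2\right)+pE_{p-3}$ (the factor $((-1)^{(p-1)/2})^2=1$ converts the $pE_{p-3}$ contribution to a plus sign), which is precisely the asserted congruence.

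The main obstacle is entirely at the level of these input congruences: the leading ($\bmod\ p$) information is not enough, and I must control the second $p$-adic digit of both $H_{(p-1)/2}$ and $\sum(-1)^k/k$, in particular the correct coefficient and sign of the $pE_{p-3}$ term. The cleanest way to secure this is to write $\sum_{k=1}^{(p-1)/2}(-1)^k/k=H_{\lfloor p/4\rfloor}-H_{(p-1)/2}$ and invoke the full $\bmod\ p^2$ forms of \cite[Theorems 3.2, 3.5]{sun-jnt-2008} for $H_{\lfloor p/4\rfloor}$ and $H_{(p-1)/2}$. As is typical for Euler-number congruences, the smallest primes can fall outside the general formula, so I would confirm $p=5$ (and, if the $E_{p-3}$-formula requires it, $p=7$) by a direct numerical check, exactly as was done at the end of the proof of Lemma \ref{Lemhk}.
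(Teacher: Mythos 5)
Your proposal is correct and follows essentially the same route as the paper: specialize identity (\ref{nkhk}) at $n=(p-1)/2$, rewrite the alternating sum as $H_{\lfloor p/4\rfloor}-H_{(p-1)/2}$, and finish with the $\bmod\ p^2$ evaluations from \cite[Theorem 3.2]{sun-jnt-2008}. Your only addition is making explicit the $\bmod\ p^2$ form of (\ref{-1k}) (which the paper states with a typo, as a $\bmod\ p$ congruence missing the factors of $p$ on the $q_p(2)^2$ and $E_{p-3}$ terms), and your final arithmetic matches the claimed congruence exactly.
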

\begin{proof}
Set $n=(p-1)/2$ in (\ref{nkhk}), we have
\begin{align*}
\sum_{k=1}^{(p-1)/2}\binom{(p-1)/2}k(-2)^kH_k=(-1)^{(p-1)/2}H_{(p-1)/2}-(-1)^{(p-1)/2}(H_{\lfloor\frac p4\rfloor}-H_{(p-1)/2}),
\end{align*}
by \cite[Theorem 3.2]{sun-jnt-2008} we immediately obtain the desired result.
\end{proof}
\begin{lem} \label{G12} For any primes $p>3$, modulo $p^4$ we have
$$
\sum_{k=1}^{(p-1)/2}G\left(\frac{p+1}2,k\right)\equiv\left(\frac{-2}p\right)p+\frac{p^3}4\left(\frac{2}p\right)E_{p-3}-(-1)^{(p-1)/2}p\left(1-pq_p(2)+p^2q_p(2)^2\right).
$$
\end{lem}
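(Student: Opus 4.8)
The plan is to evaluate the boundary sum $\sum_{k=1}^{(p-1)/2}G\left(\frac{p+1}2,k\right)$ directly, by expanding each summand as a power series in $p$ and recognising the resulting inner sums as quantities that have already been computed. I would begin from the closed form (\ref{Gpk}) and put $n=(p+1)/2$, so that $\binom{2n}n=\binom{p+1}{(p+1)/2}$, $\binom{2n-2}{n-1}=\binom{p-1}{(p-1)/2}$, $2n+2k-1=p+2k$, and the two Pochhammer blocks become $\left(\frac{p+2}2\right)_k=\prod_{j=1}^k\left(j+\frac p2\right)$ and $\left(\frac{p+2}2-k\right)_{k-1}=\prod_{i=1}^{k-1}\left(\frac p2-i\right)$. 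The decisive structural point is that, for $1\le k\le(p-1)/2$, every factor of $G\left(\frac{p+1}2,k\right)$ except $\binom{p+1}{(p+1)/2}$ is a $p$-adic unit: each $j+\frac p2$ and each $\frac p2-i$ has valuation $0$, and $p+2k$, $\left(\frac{p+1}2\right)!$, $\left(\frac{p+1}2-k\right)!$ are all prime to $p$. Hence $G\left(\frac{p+1}2,k\right)=\binom{p+1}{(p+1)/2}\,U_k$ with $U_k$ a unit, and since $\binom{p+1}{(p+1)/2}=\frac{4p}{p+1}\binom{p-1}{(p-1)/2}$ is $O(p)$ and known modulo $p^4$ by Lemma \ref{mor} together with $2^{p-1}=1+pq_p(2)$, it suffices to compute $\sum_{k=1}^{(p-1)/2}U_k$ modulo $p^3$.

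I would next peel off a scalar prefactor from $U_k$. Replacing $\binom{p-1}{(p-1)/2}^2$ by $2^{4(p-1)}$ via Morley's congruence and collecting all powers of $2$, the exponent telescopes to $2^{\,k-(p+1)/2}$; writing $2^{-(p+1)/2}=\frac12\left(\frac2p\right)\left(1-\frac12pq_p(2)+\frac38p^2q_p(2)^2+\cdots\right)$, which follows from squaring and Euler's criterion, produces the symbol $\left(\frac2p\right)$ and the Fermat-quotient corrections appearing on the right. This leaves
$$\sum_{k=1}^{(p-1)/2}G\left(\tfrac{p+1}2,k\right)=\binom{p+1}{(p+1)/2}\left(-\tfrac12(-1)^{(p-1)/2}\left(\tfrac2p\right)\left(1-\tfrac12pq_p(2)+\cdots\right)\right)\sum_{k=1}^{(p-1)/2}(-2)^kV_k,$$
where $V_k=\dfrac{\left(\frac{p+2}2\right)_k\left(\frac{p+1}2\right)!}{(p+2k)\left(\frac{p+2}2-k\right)_{k-1}^2\left(\frac{p+1}2-k\right)!}$, and the task is reduced to expanding $V_k$ through order $p^2$.

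Each factor of $V_k$ is expanded by its logarithmic series: $\prod_{j=1}^k\left(1+\frac p{2j}\right)$, $\prod_{i=1}^{k-1}\left(1-\frac p{2i}\right)^{-2}$, the analogous odd-index product coming from $\frac{\left(\frac{p+1}2\right)!}{\left(\frac{p+1}2-k\right)!}$, and $\frac1{p+2k}=\frac1{2k}\left(1-\frac p{2k}+\cdots\right)$; these introduce exactly the harmonic quantities $H_k,H_{k-1},H_k^{(2)}$ and their odd-index analogues. At order $p^0$ one finds $V_k^{(0)}=\frac{(-1)^{k-1}}{4^k}\binom{2k-2}{k-1}$, whence $(-2)^kV_k^{(0)}=-\binom{2k-2}{k-1}/2^k$, and using $\binom{2m}m\equiv\binom{(p-1)/2}m(-4)^m$ with $(1-2)^{(p-1)/2}=(-1)^{(p-1)/2}$ one checks that the $O(p)$ part of the whole expression already reproduces $p\left(\frac{-2}p\right)-(-1)^{(p-1)/2}p$. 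Collecting the order-$p$ and order-$p^2$ contributions turns $\sum_k(-2)^kV_k$ into a $p$-linear combination of a central-binomial sum, a harmonic-weighted sum of the kind evaluated in Lemma \ref{nk-2hk}, and the sum $\sum_k\binom{(p-1)/2}k(-2)^kH_k^2$ supplied by Theorem \ref{Thhar}; the last of these, via the evaluations (\ref{-1k}) and (\ref{-1k2}) of $\sum(-1)^k/k$ and $\sum(-1)^k/k^2$, is precisely what feeds the term $\frac{p^3}4\left(\frac2p\right)E_{p-3}$.

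I expect the main obstacle to be the bookkeeping of the order-$p^2$ expansion: one must handle the index shift between $\binom{2k-2}{k-1}$ and $\binom{2k}k$, assemble the several second-order sums ($H_k^2$, $H_k^{(2)}$, squares of single sums, and their odd-index counterparts) into the exact linear combinations whose values Theorem \ref{Thhar} and Lemma \ref{nk-2hk} provide, and then check that all $q_p(2)$- and $q_p(2)^2$-terms coalesce into $-(-1)^{(p-1)/2}p\left(1-pq_p(2)+p^2q_p(2)^2\right)$ while the Euler-number contributions collapse to $\frac{p^3}4\left(\frac2p\right)E_{p-3}$. The single most delicate input is a congruence modulo $p^3$ for the plain central-binomial sum $\sum_{k=1}^{(p-1)/2}\binom{2k-2}{k-1}/2^k$ (equivalently $\frac12\sum_{m=0}^{(p-3)/2}\binom{2m}m/2^m$), whose $p^2$-coefficient is genuinely needed because it is multiplied by $\binom{p+1}{(p+1)/2}=O(p)$; I would obtain it from the known supercongruences for $\sum\binom{2m}m/2^m$. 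Finally, as in the earlier lemmas, I would verify the small excluded primes (notably $p=5$) by direct computation.
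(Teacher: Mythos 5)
Your structural reading of the boundary sum is correct (each $G\left(\frac{p+1}2,k\right)$ is $p$ times a $p$-adic unit, so the unit sum is needed modulo $p^3$, and the $H_k^2$-level contributions are exactly where Theorem \ref{Thhar} enters), but the plan has a genuine gap at the order-$p$ level, and it is precisely the gap that the paper's key device exists to close. In your expansion the coefficient of $p$ in $\sum_k(-2)^kV_k$ is a sum with weights $\binom{2k-2}{k-1}/2^k$ times harmonic quantities (including the odd-index sums $\sum_{j=1}^{k-1}\frac1{2j-1}$), and it must be evaluated modulo $p^2$. You propose to read it off from Lemma \ref{nk-2hk}, but that lemma evaluates $\sum_k\binom{(p-1)/2}k(-2)^kH_k$, and the dictionary between the two weight systems, $\binom{2k}k\equiv\binom{(p-1)/2}k(-4)^k\pmod p$, holds only modulo $p$: exactly, one has $\frac{\binom{2k-2}{k-1}}{2^{k-1}}=\binom{(p-1)/2}{k-1}(-2)^{k-1}\prod_{j=1}^{k-1}\left(1-\frac p{2j-1}\right)^{-1}=\binom{(p-1)/2}{k-1}(-2)^{k-1}\left(1+p\sum_{j=1}^{k-1}\frac1{2j-1}+O(p^2)\right)$, so converting your order-$p$ sums into the lemma's sums at mod-$p^2$ precision creates new mod-$p$ sums with mixed weights such as $H_{k-1}\sum_{j<k}\frac1{2j-1}$ and $\left(\sum_{j<k}\frac1{2j-1}\right)^2$, which neither the paper's lemmas nor your proposal evaluates. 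The same issue forces your leading-order input: you need $\sum_{k\le(p-1)/2}\binom{2k}k/2^k$ modulo $p^3$, a deep external supercongruence (itself an Euler-number result of Z.-W. Sun of comparable depth to the target), whereas the paper never needs any central-binomial supercongruence at all.

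The reason the paper escapes both problems is an exact identity, used \emph{before} any $p$-adic expansion: the block $\left(\frac{p+1}2\right)!\big/\left(\frac{p+1}2-k\right)!$, which in your treatment is the sole source of both the $\binom{2k-2}{k-1}$ weights and the odd-index harmonic sums, satisfies
\begin{equation*}
\frac{\left(\frac{p+1}2\right)!}{\left(\frac{p+1}2-k\right)!}=\frac{p+1}2\,(k-1)!\binom{(p-1)/2}{k-1}
\end{equation*}
exactly (this is the content of the paper's invocation of \cite[(4.4)]{sun-ijm-2015}). Substituting this kills every central binomial coefficient and every odd-index harmonic sum; only the even-index Pochhammer ratios need expanding, and the sum collapses to $\sum_k\binom{(p-1)/2}{k-1}(-2)^{k-1}\left(\frac12+\frac{3p}4H_{k-1}+\frac{9p^2}{16}H_{k-1}^2+\frac{p^2}{16}H_{k-1}^{(2)}\right)$. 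There the constant part is the \emph{exact} binomial theorem — this is what produces $p\left(\frac{-2}p\right)-(-1)^{(p-1)/2}p$ to full accuracy with no imported supercongruence — the $H_{k-1}$ part is Lemma \ref{nk-2hk}, and the $H_{k-1}^2$ and $H_{k-1}^{(2)}$ parts are only needed modulo $p$, where (\ref{hkhk}) and a cited result of Sun apply. To repair your argument, either perform this exact conversion first (at which point you recover the paper's proof), or independently establish the missing ingredients: the half-range central-binomial sum modulo $p^3$ and the harmonic-weighted central-binomial sums modulo $p^2$, none of which are supplied by the results you cite.
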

\begin{proof}
By (\ref{Gpk}), we have
\begin{align*}
G\left(\frac{p+1}2,k\right)&=\frac{(-1)^{(p+1)/2}\binom{p+1}{(p+1)/2}\binom{p-1}{(p-1)/2}^2}{2^{(9p+9)/2-8}}\frac{(-2)^k\left(\frac p2+1\right)_{k}\left(\frac{p+1}2+1-k\right)_{k}}{(p+2k)\left(\frac p2+1-k\right)_{k-1}^2}\\
&=\frac{(-1)^{(p-1)/2}2p\binom{p-1}{(p-1)/2}^3}{2^{(9p-9)/2}}\frac{(p/2-k)(-2)^{k-1}\left(\frac p2+1\right)_{k}\left(\frac{p+1}2+1-k\right)_{k-1}}{(p+2k)\left(\frac p2-k\right)_{k}\left(\frac p2+1-k\right)_{k-1}}.
\end{align*}
It is easy to check that
\begin{align*}
\frac{\left(\frac{p}2+1\right)_k}{\left(\frac{p}2-k\right)_k}&\equiv\frac{k!\left(1+\frac{p}2H_k+\frac{p^2}4\sum_{1\leq i<j\leq k}\frac1{ij}\right)}{(-1)^kk!(\left(1-\frac{p}2H_k+\frac{p^2}4\sum_{1\leq i<j\leq k}\frac1{ij}\right)}\\
&\equiv(-1)^k\left(1+pH_k+\frac{p^2}2H_k^2\right)\pmod{p^3},
\end{align*}
and modulo $p^3$ we have
\begin{align*}
\left(\frac p2+1-k\right)_{k-1}&=\left(\frac p2+1-k\right)\ldots\left(\frac p2-1\right)\\
&\equiv(-1)^{k-1}(k-1)!\left(1-\frac p2H_{k-1}+\frac{p^2}8\left(H_{k-1}^2-H_{k-1}^{(2)}\right)\right).
\end{align*}
In view of \cite[(4.4)]{sun-ijm-2015}, we have
\begin{align*}
&(-2)^{k-1}\left(\frac{p+3}2-k\right)_{k-1}=(2k-p-3)(2k-5-p)\ldots(1-p)\\
&\equiv(2k-3)!!\left(1-p\sum_{j=1}^{k-1}\frac1{2j-1}+\frac{p^2}2\left(\sum_{j=1}^{k-1}\frac1{2j-1}\right)^2-\frac{p^2}2\sum_{j=1}^{k-1}\frac1{(2j-1)^2}\right)\\
&\equiv(2k-3)!!\frac{\binom{(p-1)/2}{k-1}(-4)^{k-1}}{\binom{2k-2}{k-1}}\pmod{p^3}.
\end{align*}
So modulo $p^4$, we have
\begin{align*}
G\left(\frac{p+1}2,k\right)&\equiv\frac{(-1)^{(p-1)/2}2p\binom{p-1}{(p-1)/2}^3}{2^{(9p-9)/2}}\frac{k-p/2}{p+2k}\binom{(p-1)/2}{k-1}(-2)^{k-1}\\
&\cdot\left(1+\frac{3p}2H_{k-1}+\frac{9p^2}8H_{k-1}^2+\frac{p^2}8H_{k-1}^{(2)}+\frac pk+\frac{p^2}{2k^2}+\frac{3p^2H_{{k-1}}}{2k}\right)
\end{align*}
Now we have the following congruence modulo $p^4$
\begin{align*}
\sum_{k=1}^{(p-1)/2}G\left(\frac{p+1}2,k\right)&\equiv\frac{(-1)^{(p-1)/2}2p\binom{p-1}{(p-1)/2}^3}{2^{(9p-9)/2}}\sum_{k=1}^{(p-1)/2}\binom{(p-1)/2}{k-1}(-2)^{k-1}\\
&\cdot\left(\frac12+\frac{3p}4H_{k-1}+\frac{9p^2}{16}H_{k-1}^2+\frac{p^2}{16}H_{k-1}^{(2)}\right).
\end{align*}
By Lemma \ref{mor}, \cite[(1.6)]{sun-ijm-2015}, (\ref{hkhk}), Lemma \ref{nk-2hk} and
$$
2^{(p-1)/2}\equiv\left(\frac2p\right)\left(1+\frac p2q_p(2)-\frac{p^2}8q_p(2)^2\right)\pmod {p^3},
$$
we have
\begin{align*}
&\sum_{k=1}^{(p-1)/2}G\left(\frac{p+1}2,k\right)\equiv p(-2)^{(3p-3)/2}\\
&\cdot\left(1-(2)^{\frac{p-1}2}+(-1)^{\frac{p-1}2}\frac{p^2}4E_{p-3}-\frac{3p}2q_p(2)+\frac{15p^2}8q_p(2)^2-\frac{3p}22^{\frac{p-1}2}H_{\frac{p-1}2}-\frac{9p^2}82^{\frac{p-1}2}H_{\frac{p-1}2}^2\right)\\
&\equiv p\left(\frac{-2}p\right)\left(1+\frac{3p}2q_p(2)+\frac{3p^2}8q_p(2)^2\right)-p(-1)^{(p-1)/2}\left(1+2pq_p(2)+p^2q_p(2)^2\right)\\
&+\frac{p^3}4\left(\frac2p\right)E_{p-3}-\frac{3p^2}2\left(\frac{-2}p\right)\left(q_p(2)+\frac{3p}2q_p(2)^2\right)+\frac{15p^3}8\left(\frac{-2}p\right)q_p(2)^2\\
&-\frac{3p^2}2(-1)^{(p-1)/2}(1+2pq_p(2))(-2q_p(2)+pq_p(2)^2)-\frac{9p^3}8(-1)^{(p-1)/2}(-2q_p(2))^2\\
&\equiv p\left(\frac{-2}p\right)+\frac{p^3}4\left(\frac2p\right)E_{p-3}-p(-1)^{(p-1)/2}\left(1-pq_p(2)+p^2q_p(2)^2\right)\pmod{p^4}.
\end{align*}
Therefore we finish the proof of Lemma \ref{G12}.
\end{proof}
\noindent{\it Proof of Theorem \ref{Thsun}}. Combining (\ref{wz1}) with Lemmas \ref{Fp1p1} and \ref{G12}, we immediately obtain that
$$
{}_4F_3\bigg[\begin{matrix} \frac76&\frac12&\frac12&\frac12\\ &\frac16&1&1\end{matrix}\bigg|-\frac18\bigg]_{\frac{p-1}2}=\sum_{n=0}^{(p-1)/2}\frac{6n+1}{(-512)^n}\binom{2n}n^3\equiv p\left(\frac{-2}p\right)+\frac{p^3}4\left(\frac{2}p\right)E_{p-3}\pmod{p^4}.
$$
Therefore the proof of Theorem \ref{Thsun} is finished. \qed
\section{Proof of Theorem \ref{Thsun1}}
By the same WZ pair as in section 3, and summing (\ref{FG}) over $n$ from $0$ to $p-1$, we have
$$
\sum_{n=0}^{p-1}F(n,k-1)-\sum_{n=0}^{p-1}F(n,k)=G(p,k)-G(0,k)=G(p,k).
$$
Furthermore, summing both side of the above identity over $k$ from $1$ to $p-1$, we obtain
\begin{align}\label{wz2}
\sum_{n=0}^{p-1}F(n,0)=F(p-1,p-1)+\sum_{k=1}^{p-1}G(p,k).
\end{align}
\begin{lem} \label{Fp1p12} Let $p>3$ be a prime. Then
$$F(p-1,p-1)\equiv -3p^2\left(1+4p-6pq_p(2)\right)\pmod{p^4}.$$
\end{lem}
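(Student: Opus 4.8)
The plan is to substitute $n=k=p-1$ directly into the definition of $F(n,k)$. Since $(-1)^{2(p-1)}=1$, $6(p-1)-2(p-1)+1=4p-3$, $9(p-1)-3(p-1)=6p-6$, and every factor carrying the argument $n-k=0$ (namely $(2n-2k)!$, $(n-k)!$ and $\binom{2n-2k}{n-k}$) collapses to $1$, this reduces the quantity to
\begin{equation*}
F(p-1,p-1)=\frac{4p-3}{2^{6p-6}}\cdot\frac{(4p-4)!}{(2p-2)!\,((p-1)!)^2}=\frac{4p-3}{2^{6p-6}}\binom{4p-4}{2p-2}\binom{2p-2}{p-1}.
\end{equation*}
This mirrors the evaluation in Lemma \ref{Fp1p1}, except that the larger arguments force the appearance of the extra central binomial $\binom{4p-4}{2p-2}$ in place of $\binom{p-1}{(p-1)/2}$, so Morley's Lemma \ref{mor} alone will not suffice.

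Next I would evaluate the four factors to the precision dictated by their $p$-adic valuations. By Kummer's theorem each of the two central binomials is divisible by $p$ exactly once, while $(4p-3)/2^{6p-6}$ is a $p$-adic unit; hence the product has valuation $2$, and to reach the modulus $p^4$ it suffices to know the two ``unit parts'' $\binom{2p-2}{p-1}/p$ and $\binom{4p-4}{2p-2}/p$, together with the prefactor $(4p-3)/2^{6p-6}$, each only modulo $p^2$. For the first binomial I would use $\binom{2p-2}{p-1}=\frac{p}{2p-1}\binom{2p-1}{p-1}$ and (\ref{2p1p}), which give $\binom{2p-2}{p-1}\equiv-(p+2p^2)\pmod{p^3}$. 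For the second I would compare with $\binom{4p}{2p}$ through the elementary identity
\begin{equation*}
\frac{\binom{4p}{2p}}{\binom{4p-4}{2p-2}}=\frac{(4p)(4p-1)(4p-2)(4p-3)}{\big((2p)(2p-1)\big)^2}=\frac{2(4p-1)(4p-3)}{p(2p-1)},
\end{equation*}
and invoke the generalized Wolstenholme (Ljunggren) congruence $\binom{4p}{2p}\equiv6\pmod{p^3}$, obtaining $\binom{4p-4}{2p-2}\equiv-p-\tfrac{10}{3}p^2\pmod{p^3}$. Finally I would expand $2^{6p-6}=(1+pq_p(2))^6\equiv1+6pq_p(2)\pmod{p^2}$, multiply the four reduced factors together, and simplify; the rational parts should organize into $-3p^2-12p^3+18p^3q_p(2)=-3p^2(1+4p-6pq_p(2))$, which is the asserted congruence modulo $p^4$.

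The main obstacle is the evaluation of $\binom{4p-4}{2p-2}$ modulo $p^3$: its top argument $4(p-1)$ is not of the form $ap$, so the standard Wolstenholme-type congruences do not apply to it directly. The comparison identity with $\binom{4p}{2p}$ is the device that circumvents this, but it introduces an extra factor of $p$ and several rational denominators ($2p-1$, $(4p-1)(4p-3)$, and the $\tfrac13$'s coming from $\binom{4p}{2p}\equiv6$), so the genuine care lies in the $p$-adic precision bookkeeping—retaining each unit part to modulo $p^2$ and tracking where the two factors of $p$ are spent—rather than in any single algebraic step. Once that accounting is fixed, the closing combination of the four factors is a routine expansion.
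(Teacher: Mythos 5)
Your proposal is correct --- I checked both binomial evaluations, $\binom{2p-2}{p-1}\equiv-p-2p^2\pmod{p^3}$ and $\binom{4p-4}{2p-2}\equiv-p-\tfrac{10}{3}p^2\pmod{p^3}$, as well as the final assembly $(4p-3)\bigl(p^2+\tfrac{16}{3}p^3\bigr)\bigl(1-6pq_p(2)\bigr)\equiv-3p^2(1+4p-6pq_p(2))\pmod{p^4}$, and all are right --- but your handling of the hard factor $\binom{4p-4}{2p-2}$ genuinely differs from the paper's. The paper never needs Ljunggren or any valuation bookkeeping: it shifts the binomials upward, writing $(4p-3)\binom{4p-4}{2p-2}\binom{2p-2}{p-1}=p\binom{4p-3}{2p-2}\binom{2p-1}{p-1}=\frac{p^2}{4p-1}\binom{4p-1}{2p-1}\binom{2p-1}{p-1}$, so that both powers of $p$ are extracted explicitly and every remaining factor is a $p$-adic unit; then $\binom{2p-1}{p-1}\equiv1\pmod{p^3}$ is exactly (\ref{2p1p}), and the only new ingredient, $\binom{4p-1}{2p-1}\equiv3\pmod{p^2}$, is proved in-line from the product expansion $\binom{4p-1}{2p-1}=3\prod_{k=1}^{p-1}\bigl(1-\frac{4p}{k}\bigr)\prod_{k=1}^{p-1}\bigl(1-\frac{4p}{p+k}\bigr)$ together with Wolstenholme (\ref{hp-1}), after which the lemma reduces to expanding $\frac{3p^2}{(4p-1)2^{6p-6}}$ modulo $p^4$. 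Your route is mathematically equivalent --- since $\binom{4p}{2p}=2\binom{4p-1}{2p-1}$, the Ljunggren congruence modulo $p^2$ \emph{is} the paper's ingredient --- but it imports an external theorem the paper does not use, and at modulus $p^3$ when, as your own accounting shows, modulus $p^2$ would suffice. What the paper's rearrangement buys is self-containedness and the elimination of all $p$-adic bookkeeping (the $p^2$ is visible from the start); what yours buys is reliance on a standard, citable result in place of the product-expansion computation, at the cost of the more delicate precision tracking you correctly identify as the crux.
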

\begin{proof} By the definition of $F(n,k)$, we have
\begin{align*}
F(p-1,p-1)&=\frac{(4p-3)(4p-4)!}{2^{6p-6}(2p-2)!{(p-1)!}^2}=\frac{(4p-3)\binom{4p-4}{2p-2}\binom{2p-2}{p-1}}{2^{6p-6}}\\
&=\frac{p\binom{4p-3}{2p-2}\binom{2p-1}{p-1}}{2^{6p-6}}=\frac{p^2\binom{4p-1}{2p-1}\binom{2p-1}{p-1}}{(4p-1)2^{6p-6}}.
\end{align*}
This, with (\ref{2p1p}), $2^{p-1}=1+pq_p(2)$ and
$$
\binom{4p-1}{2p-1}=-\prod_{k=1}^{2p-1}\left(1-\frac{4p}k\right)=3\prod_{k=1}^{p-1}\left(1-\frac{4p}k\right)\cdot\prod_{k=1}^{p-1}\left(1-\frac{4p}{p+k}\right)\equiv3\pmod{p^2}
$$
yields that
$$
F(p-1,p-1)\equiv\frac{3p^2}{(4p-1)2^{6p-6}}\equiv-3p^2\left(1+4p-6pq_p(2)\right)\pmod{p^4}.
$$
Therefore the proof of Lemma \ref{Fp1p12} is complete.
\end{proof}
Recall that the Bernoulli numbers $\{B_n\}$ and the Bernoulli polynomials $\{B_n(x)\}$ are defined as follows:
$$\frac x{e^x-1}=\sum_{n=0}^\infty B_n\frac{x^n}{n!}\ \ (0<|x|<2\pi)\ \mbox{and}\ B_n(x)=\sum_{k=0}^n\binom nkB_kx^{n-k}\ \ (n\in\mathbb{N}).$$
\begin{lem}\label{mos}{\rm [See \cite{MOS}]} Let $x$ and $y$ be variables and $n\in\mathbb{N}$. Then
\begin{align*}
&{\rm (i)}.\  B_{2n+1}=0.\\
&{\rm (ii)}.\ B_n(1-x)=(-1)^nB_n(x).\\
&{\rm (iii)}.\  B_n(x+y)=\sum_{k=0}^n\binom{n}kB_{n-k}(y)x^k.\\
&{\rm (iv)}.\  E_{n-1}(x)=\frac{2^n}n\left(B_n\left(\frac{x+1}2\right)-B_n\left(\frac x2\right)\right).
\end{align*}
\end{lem}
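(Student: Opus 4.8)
The plan is to derive all four identities directly from the exponential generating functions that define the Bernoulli and Euler polynomials in the Introduction, so the lemma reduces to a purely formal computation with power series (which is why it is quoted from \cite{MOS}).

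For (i), I would add $x/2$ to the Bernoulli generating function and observe that
\[
\frac{x}{e^x-1}+\frac x2=\frac x2\cdot\frac{e^x+1}{e^x-1}
\]
is an even function of $x$. Hence every coefficient of an odd power of $x$ must vanish; since $B_1=-\tfrac12$ is exactly cancelled by the added $x/2$ term, this forces $B_{2n+1}=0$ for all $n\geq1$. For (ii), I would substitute $x\mapsto 1-x$ and $t\mapsto -t$ in $\sum_{n}B_n(x)\frac{t^n}{n!}=\frac{te^{xt}}{e^t-1}$ and simplify via $\frac{-t\,e^{-(1-x)t}}{e^{-t}-1}=\frac{t\,e^{xt}}{e^t-1}$ (clearing denominators by multiplying numerator and denominator by $e^t$); comparing coefficients gives $(-1)^nB_n(1-x)=B_n(x)$, the reflection formula. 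For (iii), I would factor $e^{(x+y)t}=e^{xt}e^{yt}$, so that $\sum_n B_n(x+y)\frac{t^n}{n!}=e^{xt}\sum_m B_m(y)\frac{t^m}{m!}$, and read off the addition formula as the Cauchy product.

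The only computation that takes a little care is (iv), which links the two families. Here I would use the partial-fraction identity
\[
\frac1{e^t+1}=\frac1{e^t-1}-\frac2{e^{2t}-1}.
\]
Evaluating the Bernoulli generating series at argument $s=2t$ with the two arguments $\frac{x+1}2$ and $\frac x2$ and subtracting yields
\[
\sum_{n\geq0}\Bigl(B_n\bigl(\tfrac{x+1}2\bigr)-B_n\bigl(\tfrac x2\bigr)\Bigr)\frac{(2t)^n}{n!}=\frac{2t\,e^{xt}}{e^t+1}=t\sum_{m\geq0}E_m(x)\frac{t^m}{m!}.
\]
Matching the coefficient of $t^n$ on both sides (so that $m=n-1$ on the right) gives $\frac{2^n}{n!}\bigl(B_n(\tfrac{x+1}2)-B_n(\tfrac x2)\bigr)=\frac{E_{n-1}(x)}{(n-1)!}$, which rearranges to the stated formula. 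I expect this last bookkeeping step—keeping the factor-of-two scaling and the index shift $m=n-1$ straight—to be the only place where an error could creep in; the remaining parts are immediate.
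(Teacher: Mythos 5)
Your proof is correct, but note that the paper does not prove this lemma at all: it simply quotes these four classical facts from the handbook \cite{MOS}, so your derivation is genuinely different in that it makes the lemma self-contained. All four generating-function arguments check out: the evenness of $\frac{x}{e^x-1}+\frac x2$ gives (i) (implicitly for $n\ge1$, the only range where the claim can hold, since $B_1=-\tfrac12$); the substitution $(x,t)\mapsto(1-x,-t)$ gives (ii); the factorization $e^{(x+y)t}=e^{xt}e^{yt}$ plus the Cauchy product gives (iii); and your bookkeeping in (iv) is right, since subtracting the two series evaluated at $2t$ yields $\frac{2te^{(x+1)t}}{e^{2t}-1}-\frac{2te^{xt}}{e^{2t}-1}=\frac{2te^{xt}(e^t-1)}{e^{2t}-1}=\frac{2te^{xt}}{e^t+1}=t\sum_{m\ge0}E_m(x)t^m/m!$, after which the shift $m=n-1$ gives exactly the stated formula. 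Two small remarks. First, the paper defines $B_n(x)$ by the finite sum $\sum_{k=0}^n\binom nkB_kx^{n-k}$ rather than by a generating function, so strictly you should record the one-line Cauchy-product identity $\sum_{n\ge0} B_n(x)t^n/n!=\frac{te^{xt}}{e^t-1}$ before invoking it; your argument for (iii) is precisely this computation (with $B_k$ in place of $B_k(y)$), so nothing is missing in substance. Second, the partial-fraction identity $\frac1{e^t+1}=\frac1{e^t-1}-\frac2{e^{2t}-1}$ you cite for (iv) is never actually used: your displayed computation only needs $e^{2t}-1=(e^t-1)(e^t+1)$, which is simpler, so you could drop that identity entirely.
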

\begin{lem}\label{Lem8k2kk} For any prime $p>3$, we have
$$
\sum_{k=1}^{(p-1)/2}\frac{8^k}{k(2k-1)\binom{2k}k}\equiv\left(\frac{-2}p\right)\sum_{k=1}^{(p-1)/2}\frac{2^k}{k^2\binom{2k}k}\equiv\frac14E_{p-3}\left(\frac14\right)\pmod p.
$$
\end{lem}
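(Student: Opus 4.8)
The statement packages two congruences, and the bridge for both is the reduction $\binom{2k}k\equiv\binom{(p-1)/2}k(-4)^k\pmod p$ already invoked at the end of Section 2. Put $m=(p-1)/2$. Applying it to the central binomial coefficients gives, modulo $p$,
$$\frac{2^k}{\binom{2k}k}\equiv\frac{(-2)^{-k}}{\binom mk},\qquad\frac{8^k}{\binom{2k}k}\equiv\frac{(-2)^k}{\binom mk},$$
so the two sums turn into $\sum_{k=1}^m\frac{(-2)^{-k}}{k^2\binom mk}$ and $\sum_{k=1}^m\frac{(-2)^k}{k(2k-1)\binom mk}$. The gain is that these are written with $\binom mk$, which is symmetric under $k\mapsto m-k$, whereas $\binom{2k}k$ is not; so the plan is to first dispatch the elementary congruence between the two sums by a reflection, and only then to face the genuine evaluation.

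For the first congruence I would reflect the $8^k$-sum by $k=m-j$, $0\le j\le m-1$. Euler's criterion gives $(-2)^{m-j}\equiv\left(\frac{-2}p\right)(-2)^{-j}$, while $2k-1\equiv-2(j+1)$ and $k\equiv-\frac{2j+1}2$ modulo $p$, so the denominator $k(2k-1)\binom mk$ becomes $(j+1)(2j+1)\binom mj$ and
$$\sum_{k=1}^m\frac{(-2)^k}{k(2k-1)\binom mk}\equiv\left(\frac{-2}p\right)\sum_{j=0}^{m-1}\frac{(-2)^{-j}}{(j+1)(2j+1)\binom mj}\pmod p.$$
Shifting $i=j+1$ and using $\binom m{i-1}\equiv-\frac{2i}{2i-1}\binom mi\pmod p$ (immediate from $\binom m{i-1}/\binom mi=i/(m-i+1)$ and $m-i+1\equiv\frac{1-2i}2$) cancels the factor $i(2i-1)$ and collapses the summand to $\frac{(-2)^{-i}}{i^2\binom mi}$. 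Thus the $8^k$-sum is congruent to $\left(\frac{-2}p\right)\sum_{i=1}^m\frac{(-2)^{-i}}{i^2\binom mi}$, i.e.\ to $\left(\frac{-2}p\right)$ times the $2^k$-sum, which is the first asserted congruence; the whole step is purely formal once these reflection congruences are recorded.

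It remains to evaluate the common value, and this is the crux. Reflecting the $2^k$-sum the same way and converting back with $\binom{2j}j\equiv\binom mj(-4)^j$ rewrites the target as an odd-indexed sum, namely
$$\left(\frac{-2}p\right)\sum_{k=1}^m\frac{2^k}{k^2\binom{2k}k}\equiv 4\sum_{j=0}^{(p-3)/2}\frac{8^j}{(2j+1)^2\binom{2j}j}\pmod p,$$
so the problem reduces to showing $\sum_{j=0}^{(p-3)/2}\frac{8^j}{(2j+1)^2\binom{2j}j}\equiv\frac1{16}E_{p-3}\!\left(\frac14\right)$. Here the binomial cannot be removed by a further reflection, so this is a true second-order central-binomial congruence rather than an elementary reciprocal-square sum. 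I would evaluate it by feeding the reciprocal squares $1/(2j+1)^2$ into the Bernoulli--Euler relation of Lemma \ref{mos}(iv), which expresses $E_{p-3}(1/4)$ through $B_{p-2}(5/8)-B_{p-2}(1/8)$, and then matching with the quarter-interval harmonic sums $H_{\lfloor p/4\rfloor}$, $H^{(2)}_{\lfloor p/4\rfloor}$ evaluated in \cite{sun-jnt-2008} (as already used in (\ref{-1k}) and (\ref{-1k2})). I expect this final identification to be the main obstacle; the normalizing constant, and any excluded small primes such as $p=5$, can be fixed by a direct check, exactly as is done repeatedly elsewhere in the paper.
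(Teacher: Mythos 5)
Your first congruence is correct, and it is in substance the paper's own argument: the paper performs the same reflection $k\mapsto\frac{p+1}2-k$ working directly with $\binom{2k}k$, while you first pass to $\binom{(p-1)/2}k$ via $\binom{2k}k\equiv\binom{(p-1)/2}k(-4)^k\pmod p$; either way it is a routine computation. The genuine gap is in the second congruence, which is the whole content of the lemma, and the reduction you do carry out makes no progress toward it. Indeed, the exact identity $\frac{8^k}{k(2k-1)\binom{2k}k}=\frac{4\cdot 8^{k-1}}{(2k-1)^2\binom{2k-2}{k-1}}$ shows that your display
$$\left(\frac{-2}p\right)\sum_{k=1}^{(p-1)/2}\frac{2^k}{k^2\binom{2k}k}\equiv 4\sum_{j=0}^{(p-3)/2}\frac{8^j}{(2j+1)^2\binom{2j}j}\pmod p$$
is literally the first congruence restated, so after it you are facing the original problem again. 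Moreover, the plan you sketch for finishing cannot work as stated: Lemma \ref{mos}(iv) and the quarter-interval evaluations of \cite{sun-jnt-2008} used in (\ref{-1k}) and (\ref{-1k2}) apply to sums of reciprocals and reciprocal squares \emph{without} a central binomial coefficient, and, as you yourself observe, the factor $\binom{2j}j$ cannot be removed by any further reflection. The crux --- trading the central binomial coefficient for elementary terms --- is exactly what your outline omits, and you acknowledge as much by calling it ``the main obstacle.''

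The paper's missing ingredient is the finite identity \cite[(22)]{mt-jnt-2013}. With $n=(p-1)/2$ it gives
$$\binom{p-1}{\frac{p-1}2}\sum_{k=1}^{\frac{p-1}2}\frac{2^k}{k^2\binom{2k}k}=\sum_{k=1}^{\frac{p-1}2}\binom{p-1}{\frac{p-1}2-k}\frac{v_k(0)}{k^2}+\binom{p-1}{\frac{p-1}2}\sum_{k=1}^{\frac{p-1}2}\frac1{k^2},$$
where $v_0(x)=2$, $v_1(x)=x$, $v_n(x)=xv_{n-1}(x)-v_{n-2}(x)$. Since $v_k(0)$ vanishes for odd $k$ and equals $\pm2$ according as $k\equiv0,2\pmod4$, and since $\binom{p-1}j\equiv(-1)^j\pmod p$ and $H_{(p-1)/2}^{(2)}\equiv0\pmod p$, this converts the central-binomial sum into elementary sums of $\pm2/k^2$ over the classes $k\equiv0,2\pmod4$, hence (after rescaling) over residue classes modulo $8$. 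Only then does the Bernoulli machinery enter: Z.-H. Sun's formula for $\sum_{x\equiv r\,(\mathrm{mod}\ m)}x^{p-3}$ expresses these sums as values $B_{p-2}\left(\left\{\cdot\right\}\right)$ at eighths, and parts (ii) and (iv) of Lemma \ref{mos}, together with a case analysis of $p$ modulo $8$ (which is where the sign $\left(\frac{-2}p\right)$ arises), assemble them into $\frac14\left(\frac{-2}p\right)E_{p-3}\left(\frac14\right)$. Without this identity of \cite{mt-jnt-2013}, or an equivalent device for eliminating $\binom{2k}k$, your proposal cannot be completed.
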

\begin{proof} We know that
\begin{align*}
\sum_{k=1}^{(p-1)/2}\frac{8^k}{k(2k-1)\binom{2k}k}&=\sum_{k=1}^{(p-1)/2}\frac{8^{(p+1)/2-k}}{((p+1)/2-k)(p-2k)\binom{p+1-2k}{(p+1)/2-k}}\\
&\equiv\sum_{k=1}^{(p-1)/2}\frac{2^{(p+3)/2}}{8^kk(2k-1)\binom{p-2k}{(p+1)/2-k}}\pmod p.
\end{align*}
It is easy to check that modulo $p$, we have
$$
\binom{p-2k}{\frac{p+1}2-k}\equiv(-1)^{\frac{p+1}2-k}\binom{\frac{p-1}2+k}{2k-1}=\frac{2k(-1)^{\frac{p+1}2-k}}{\frac{p-1}2-k+1}\binom{\frac{p-1}2+k}{2k}\equiv\frac{4k(-1)^{\frac{p-1}2}}{2k-1}\frac{\binom{2k}{k}}{16^k}.
$$
So
$$
\sum_{k=1}^{(p-1)/2}\frac{8^k}{k(2k-1)\binom{2k}k}\equiv(-2)^{(p-1)/2}\sum_{k=1}^{(p-1)/2}\frac{2^k}{k^2\binom{2k}k}\equiv\left(\frac{-2}p\right)\sum_{k=1}^{(p-1)/2}\frac{2^k}{k^2\binom{2k}k}\pmod p.
$$
Now we turn to prove
$$\sum_{k=1}^{(p-1)/2}\frac{2^k}{k^2\binom{2k}k}\equiv\frac14\left(\frac{-2}p\right)E_{p-3}\left(\frac14\right)\pmod p.$$
Setting $n=(p-1)/2$ in \cite[(22)]{mt-jnt-2013}, we have
\begin{align*}
\binom{p-1}{(p-1)/2}\sum_{k=1}^{(p-1)/2}\frac{2^k}{k^2\binom{2k}k}&=\sum_{k=1}^{(p-1)/2}\binom{p-1}{(p-1)/2-k}\frac{v_k(0)}{k^2}+\binom{p-1}{(p-1)/2}\sum_{k=1}^{(p-1)/2}\frac{1}{k^2}\\
&\equiv\sum_{k=1}^{(p-1)/2}(-1)^{(p-1)/2-k}\frac{v_k(0)}{k^2}\pmod p,
\end{align*}
where $v_n(x)$ are defined as follows:
$$
v_0(x)=2,\ \ v_1(x)=x,\ \ \mbox{and}\ \ v_n(x)=xv_{n-1}(x)-v_{n-2}(x)\ \ \mbox{for}\ \ (n>1).
$$
So
\begin{align*}
\sum_{k=1}^{(p-1)/2}\frac{2^k}{k^2\binom{2k}k}&\equiv\sum_{\substack{k=1\\k\equiv2\pmod4}}^{(p-1)/2}(-1)^{k}\frac{-2}{k^2}+\sum_{\substack{k=1\\k\equiv0\pmod4}}^{(p-1)/2}(-1)^{k}\frac{2}{k^2}\\
&\equiv2\left(\sum_{j=1}^{\lfloor\frac{p-1}8\rfloor}\frac1{16j^2}-\sum_{j=1}^{\lfloor\frac{p+3}8\rfloor}\frac1{(4j-2)^2}\right)\pmod p.
\end{align*}
In view of \cite[Lemma2.1]{sun-jnt-2008}, for any $p,m\in\mathbb{N}$ and $k,r\in\mathbb{Z}$ with $k\geq0$, we have
$$
\sum_{\substack{x=0\\{x\equiv r\pmod{m}}}}^{p-1}x^k=\frac{m^k}{k+1}\left(B_{k+1}\left(\frac{p}{m}+\left\{\frac{r-p}{m}\right\}\right)-B_{k+1}\left(\left\{\frac{r}{m}\right\}\right)\right).
$$
This, with (i) and (iii) of Lemma \ref{mos} yields that
\begin{align*}
\sum_{j=1}^{\lfloor\frac{p-1}8\rfloor}\frac1{j^2}=64\sum_{\substack{k=1\\k\equiv0\pmod8}}^{p-1}\frac{1}{k^2}\equiv64\sum_{\substack{x=0\\x\equiv0\pmod8}}^{p-1}x^{p-3}\equiv-\frac12B_{p-2}\left(\left\{-\frac p8\right\}\right)\pmod p
\end{align*}
and
\begin{align*}
\sum_{j=1}^{\lfloor\frac{p+3}8\rfloor}\frac1{(2j-1)^2}&=16\sum_{j=1}^{\lfloor\frac{p+3}8\rfloor}\frac1{(8j-4)^2}=16\sum_{\substack{j=1\\j\equiv4\pmod8}}^{p-1}\frac1{j^2}\equiv16\sum_{\substack{x=0\\j\equiv4\pmod8}}^{p-1}x^{p-3}\\
&\equiv-\frac18B_{p-2}\left(\left\{\frac{4-p}8\right\}\right)\pmod p.
\end{align*}
Hence
$$
\sum_{k=1}^{(p-1)/2}\frac{2^k}{k^2\binom{2k}k}\equiv\frac1{16}\left(B_{p-2}\left(\left\{\frac{4-p}8\right\}\right)-B_{p-2}\left(\left\{-\frac p8\right\}\right)\right)\pmod p.
$$
It is easy to see that
\begin{align*}
B_{p-2}\left(\left\{\frac{4-p}8\right\}\right)-B_{p-2}\left(\left\{-\frac p8\right\}\right)&=\begin{cases}B_{p-2}\left(\left\{\frac{3}8\right\}\right)-B_{p-2}\left(\left\{\frac 78\right\}\right) &\tt{if}\ p\equiv1\pmod {8},\\B_{p-2}\left(\left\{\frac{1}8\right\}\right)-B_{p-2}\left(\left\{\frac 58\right\}\right) &\tt{if}\ p\equiv3\pmod {8},\\ B_{p-2}\left(\left\{\frac{7}8\right\}\right)-B_{p-2}\left(\left\{\frac 38\right\}\right) &\tt{if}\ p\equiv5\pmod {8},\\B_{p-2}\left(\left\{\frac{5}8\right\}\right)-B_{p-2}\left(\left\{\frac 18\right\}\right) &\tt{if}\ p\equiv7\pmod {8}.\end{cases}
\end{align*}
So by (ii) of Lemma \ref{mos} , we immediately get that
$$
B_{p-2}\left(\left\{\frac{4-p}8\right\}\right)-B_{p-2}\left(\left\{-\frac p8\right\}\right)=\left(\frac{-2}p\right)\left(B_{p-2}\left(\left\{\frac{1}8\right\}\right)-B_{p-2}\left(\left\{\frac 58\right\}\right)\right).
$$
Then by (iv) of Lemma \ref{mos}, we have
$$
\sum_{k=1}^{(p-1)/2}\frac{2^k}{k^2\binom{2k}k}\equiv\frac1{16}\left(4\left(\frac{-2}p\right)E_{p-3}\left(\frac14\right)\right)=\frac14\left(\frac{-2}p\right)E_{p-3}\left(\frac14\right)\pmod p.
$$
Therefore the proof of Lemma \ref{Lem8k2kk} is complete.
\end{proof}
\begin{lem} \label{G13} For any primes $p>3$, we have
$$
\sum_{k=1}^{(p-1)/2}G(p,k)\equiv\frac{p^3}{16}E_{p-3}\left(\frac14\right)\pmod{p^4}.
$$
\end{lem}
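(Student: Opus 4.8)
The plan is to evaluate $\sum_{k=1}^{(p-1)/2}G(p,k)$ by specializing the closed form (\ref{Gpk}) at $n=p$ and tracking $p$-adic valuations termwise. The decisive observation is that each summand $G(p,k)$ with $1\le k\le (p-1)/2$ is divisible by exactly $p^3$: the factor $\binom{2p-2}{p-1}^2$ contributes $p^2$, while $\frac{p!}{(p-k)!}=p(p-1)\cdots(p-k+1)$ contributes one further power of $p$ and no others, since $p-k<p$ forces the remaining factors to be $p$-units. Because the target $\frac{p^3}{16}E_{p-3}(\tfrac14)$ already carries $p^3$, it therefore suffices to compute $G(p,k)/p^3\pmod p$ for each $k$ and then sum.

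First I would isolate the power of $p$ using $\binom{2p-2}{p-1}=\frac{p}{2p-1}\binom{2p-1}{p-1}$, which together with (\ref{2p1p}) gives $\binom{2p-2}{p-1}/p\equiv -1\pmod p$, hence $\binom{2p-2}{p-1}^2/p^2\equiv1\pmod p$. The remaining factors reduce as follows modulo $p$: by Fermat's little theorem $2^{9p-k-8}\equiv 2^{1-k}$, so $2^{-(9p-k-8)}\equiv 2^{k-1}$; $\binom{2p}{p}=2\binom{2p-1}{p-1}\equiv2$; $(\tfrac12+p)_k\equiv(\tfrac12)_k$ and $(\tfrac12+p-k)_{k-1}\equiv(\tfrac12-k)_{k-1}$; $\frac{p!}{p(p-k)!}\equiv(-1)^{k-1}(k-1)!$; and $\frac1{2p+2k-1}\equiv\frac1{2k-1}$ (none of these denominators vanishes mod $p$ for $k\le(p-1)/2$). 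Collecting signs via $(-1)^{p+k}(-1)^{k-1}=1$ then yields
$$\frac{G(p,k)}{p^3}\equiv 2^k\left(\tfrac12\right)_k(k-1)!\,\frac{1}{(2k-1)\left(\tfrac12-k\right)_{k-1}^2}\pmod p.$$

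To finish I would rewrite the Pochhammer symbols as double factorials: $\left(\tfrac12\right)_k=\frac{(2k-1)!!}{2^k}$ and, after reindexing the product, $\left(\tfrac12-k\right)_{k-1}=\frac{(-1)^{k-1}(2k-1)!!}{2^{k-1}}$. Substituting and using $(2k-1)!!=\frac{(2k)!}{2^k k!}$ collapses the expression to
$$\frac{G(p,k)}{p^3}\equiv\frac{8^k}{4\,k(2k-1)\binom{2k}{k}}\pmod p,$$
so that $\sum_{k=1}^{(p-1)/2}G(p,k)\equiv\frac{p^3}{4}\sum_{k=1}^{(p-1)/2}\frac{8^k}{k(2k-1)\binom{2k}{k}}\pmod{p^4}$. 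Applying Lemma \ref{Lem8k2kk}, which evaluates this last sum as $\frac14 E_{p-3}(\tfrac14)\pmod p$, gives exactly $\frac{p^3}{16}E_{p-3}(\tfrac14)$.

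The main obstacle I anticipate is bookkeeping rather than any conceptual difficulty: one must verify that $G(p,k)$ is divisible by precisely $p^3$, so that the reduced quantity is $p$-integral and only mod-$p$ precision is needed, and one must handle the half-integer Pochhammer symbol $(\tfrac12-k)_{k-1}$ with care, since its factors straddle zero, so the reindexing that produces $(2k-1)!!$ and the accompanying sign $(-1)^{k-1}$ must be tracked without error. Once these reductions are pinned down, the collapse to the sum controlled by Lemma \ref{Lem8k2kk} is immediate.
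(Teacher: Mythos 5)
Your proof is correct and follows essentially the same route as the paper: extract the factor $p^3$ from $\binom{2p-2}{p-1}^2\,p!/(p-k)!$, reduce the remaining unit modulo $p$ to get $G(p,k)\equiv\frac{p^3}{4}\cdot\frac{8^k}{k(2k-1)\binom{2k}{k}}\pmod{p^4}$, and invoke Lemma \ref{Lem8k2kk}. The only cosmetic difference is that you evaluate $\frac{p!}{p\,(p-k)!}=(p-1)\cdots(p-k+1)\equiv(-1)^{k-1}(k-1)!$ directly, whereas the paper reduces $(p-k)!$ by a Wilson-type congruence; the two are equivalent.
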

\begin{proof}
By (\ref{Gpk}), we have
\begin{align*}
G(p,k)&=\frac{(-1)^{k+1}}{2^{9p-k-8}}\frac{\binom{2p}p\binom{2p-2}{p-1}^2\left(\frac12+p\right)_kp!}{(2p+2k-1)\left(\frac12+p-k\right)_{k-1}^2(p-k)!}\\
&\equiv p^3\frac{(-2)^k\left(\frac12\right)_k}{(2k-1)\left(\frac12-k\right)_{k-1}^2(p-k)!}\pmod{p^4}.
\end{align*}
We know that
$$
\frac{\left(\frac12\right)_k}{\left(\frac12-k\right)_{k-1}}=\frac{\frac12\cdot\frac32\ldots\frac{2k-1}2}{\frac{1-2k}2\cdot\frac{3-2k}2\ldots\frac{-3}2}=\frac{(-1)^{k-1}}2,
$$
$$
\left(\frac12-k\right)_{k-1}=(-1)^{k-1}\frac{3\cdot5\ldots(2k-1)}{2^{k-1}}=(-1)^{k-1}\frac{(2k-2)!!}{2^{k-1}}
$$
and
$$
(p-k)!=(p-k)(p-(k+1))\ldots(p-(p-1))\equiv(-1)^{p-k}\frac{(p-1)!}{(k-1)!}\equiv\frac{(-1)^k}{(k-1)!}\pmod p.
$$
Hence
$$
G(p,k)\equiv\frac{p^3}4\frac{8^k}{k(2k-1)\binom{2k}k}\pmod {p^4}.
$$
Therefore we can immediately get the desired result with Lemma \ref{Lem8k2kk}.
\end{proof}
\begin{lem}\label{Lemp+12} Let $p>3$ be a prime. Then
$$
G\left(p,\frac{p+1}2\right)\equiv p\left(\frac{-2}p\right)\left(1-\frac{3p}2q_p(2)+\frac{15p^2}8q_p(2)^2\right)\pmod{p^4}.
$$
\end{lem}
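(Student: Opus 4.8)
The plan is to evaluate $G\left(p,\tfrac{p+1}2\right)$ directly from its closed form. Substituting $n=p$ and $k=\tfrac{p+1}2$ into the definition of $G(n,k)$ (equivalently into (\ref{Gpk})), I would first record the elementary simplifications $2n+2k-1=3p$, $n-k=\tfrac{p-1}2$, $2n-2k=p-1$ and $2n+2k=3p+1$, so that
\[
G\left(p,\tfrac{p+1}2\right)=(-1)^{(p+3)/2}\,\frac{p}{3}\,\frac{(3p+1)!\,(p-1)!\,\binom{p-1}{(p-1)/2}}{2^{(15p-11)/2}\left(\tfrac{3p+1}2\right)!\left(\tfrac{p-1}2\right)!\,p!^2}.
\]
The first real step is to locate the powers of $p$. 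Working from (\ref{Gpk}), the two Pochhammer symbols $\left(\tfrac12+p\right)_{(p+1)/2}$ (numerator) and $\left(\tfrac12+p-k\right)_{k-1}=\left(\tfrac p2\right)_{(p-1)/2}$ (denominator) each contain exactly one factor of $p$, coming from their extreme factors $\tfrac{3p}2$ and $\tfrac p2$ respectively; together with the explicit $n^2/(2n+2k-1)=p/3$ this shows $G\left(p,\tfrac{p+1}2\right)$ has $p$-adic valuation $1$. I would pull all of these $p$'s out, reducing the problem to computing a single $p$-adic unit modulo $p^3$.

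For that unit I would use congruences already available. First $\binom{2p-1}{p-1}\equiv1\pmod{p^3}$ (from (\ref{2p1p})) lets me replace the central-binomial content by constants, and $\binom{2p-2}{p-1}=\tfrac{p}{2p-1}\binom{2p-1}{p-1}$ cleanly exposes the extra $p^2$ in $\binom{2p-2}{p-1}^2$ while introducing a factor $(2p-1)^{-2}$. Next, every power of $2$ is converted to a Fermat-quotient series via $2^{p-1}=1+pq_p(2)$ and
\[
2^{(p-1)/2}\equiv\left(\tfrac2p\right)\left(1+\tfrac p2q_p(2)-\tfrac{p^2}8q_p(2)^2\right)\pmod{p^3};
\]
the sign $(-1)^{(p+3)/2}=\left(\tfrac{-1}p\right)$ combines with a surviving factor $2^{(p-1)/2}$ to produce $\left(\tfrac{-2}p\right)$. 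As a checkpoint, reducing everything modulo $p$ (using $\left(\left(\tfrac{p-1}2\right)!\right)^2\equiv(-1)^{(p+1)/2}$ and Wilson's theorem) already yields $G\left(p,\tfrac{p+1}2\right)\equiv p\left(\tfrac{-2}p\right)\pmod{p^2}$, the known leading term.

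The heart of the argument is the second-order expansion of the two odd-integer products left once the $p$-factors are removed, namely $\prod(2j+1+2p)$ over $0\le j\le\tfrac{p-3}2$ and $\prod(2j+p)$ over $1\le j\le\tfrac{p-3}2$. Following the style of the expansion of $\left(\tfrac p2+1\right)_k/\left(\tfrac p2-k\right)_k$ in the proof of Lemma \ref{G12}, I would write each as its leading (double-)factorial times $1+p\,S_1+p^2\,S_2\pmod{p^3}$, where $S_1,S_2$ are harmonic-type sums over odd denominators and over $1\le j\le\tfrac{p-3}2$. These reduce to $q_p(2)$ through $O:=\sum_{m\ \mathrm{odd},\,m\le p-2}\tfrac1m=H_{p-1}-\tfrac12H_{(p-1)/2}\equiv q_p(2)-\tfrac p2q_p(2)^2\pmod{p^2}$, using Wolstenholme's $H_{p-1}\equiv0\pmod{p^2}$ and $H_{p-1}^{(2)}\equiv0\pmod p$ together with the congruences for $H_{(p-1)/2}$ and $H_{(p-1)/2}^{(2)}$ from \cite{sun-jnt-2008}.

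I expect the main obstacle to be precisely this final bookkeeping: there are several first- and second-order contributions (from the two products, from the two powers of $2$, and from $(2p-1)^{-2}$), and the claim is that they collapse to exactly the coefficients $-\tfrac32q_p(2)$ and $\tfrac{15}8q_p(2)^2$ multiplying $p$ and $p^2$. My plan is to stage this: first settle the statement modulo $p^2$ (only the $q_p(2)$ term is needed), then push the same expansions one order further to modulo $p^3$, at each stage discarding the many sums that vanish by Wolstenholme's theorem and keeping only those expressible through $q_p(2)$. The delicate point to verify is an order-of-magnitude accounting: the $O(p^3)$ error in replacing $\binom{2p-1}{p-1}^3$ by $1$ and the $O(p^3)$ error in the $2^{(p-1)/2}$ series, once multiplied by the leading $p$, sit at the $p^4$ level and so do not contaminate the answer; making sure no dropped term is secretly of lower order is what keeps the computation safe.
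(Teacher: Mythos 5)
Your plan is, in outline, the paper's own proof: evaluate $G\left(p,\frac{p+1}2\right)$ from (\ref{Gpk}), pull out the single power of $p$, expand the two shifted odd products $\prod_{j=0}^{(p-3)/2}(2j+1+2p)$ and $\prod_{j=1}^{(p-3)/2}(2j+p)$ to second order, and convert the resulting harmonic-type sums and powers of $2$ into Fermat quotients via Wolstenholme and Sun's congruences for $H_{(p-1)/2}$ and $H_{(p-1)/2}^{(2)}$ --- this is exactly what the paper does en route to (\ref{imp}). Your closed form for $G\left(p,\frac{p+1}2\right)$ is correct, as is the valuation count (though your attribution of it is off: in (\ref{Gpk}) the two Pochhammer $p$'s cancel \emph{each other}, and the net valuation $1$ comes from $\binom{2p-2}{p-1}^2\,p!$ against $3p\left(\frac p2\right)_{(p-1)/2}^2$).

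The genuine gap is in your inventory of what survives after those two expansions. Their leading terms are the double factorials $(p-2)!!$ and $(p-3)!!^2$, and recombining these with the factorials present in (\ref{Gpk}) leaves
\begin{equation*}
\frac{(p-2)!!\,(p-1)!}{(p-3)!!^{2}\left(\frac{p-1}{2}\right)!}
=\frac{(p-1)^{2}}{2^{(3p-3)/2}}\binom{p-1}{\frac{p-1}{2}}^{2},
\end{equation*}
so besides powers of $2$, $\binom{2p-1}{p-1}^3$ and harmless rational factors, you must still evaluate $\binom{p-1}{(p-1)/2}^{2}$ modulo $p^{3}$. That is Morley's congruence (Lemma \ref{mor}), which appears nowhere in your list of tools ($\binom{2p-1}{p-1}\equiv1$, power-of-$2$ series, harmonic sums), and it is not a cosmetic omission: Morley gives $\binom{p-1}{(p-1)/2}^{2}\equiv16^{p-1}\equiv1+4pq_p(2)+6p^{2}q_p(2)^{2}\pmod{p^{3}}$, which feeds the linear and quadratic Fermat-quotient coefficients at full strength. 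If you handle the central binomial only by Wilson's theorem, as in your mod-$p$ checkpoint, the bookkeeping still closes formally but produces $1-\frac{11p}{2}q_p(2)+\cdots$ instead of $1-\frac{3p}{2}q_p(2)+\frac{15p^{2}}{8}q_p(2)^{2}$. This is precisely how the paper finishes: it reduces to (\ref{imp}), which exhibits $\binom{p-1}{(p-1)/2}^{2}/2^{p-1}$, and then (silently invoking Lemma \ref{mor}) replaces that square by $2^{4p-4}$ before expanding $2^{(11p-11)/2}$. The repair is easy within your framework --- quote Lemma \ref{mor}, or rederive it from $\binom{p-1}{(p-1)/2}=(-1)^{(p-1)/2}\prod_{k=1}^{(p-1)/2}\left(1-\frac pk\right)$ together with the congruences $H_{(p-1)/2}\equiv-2q_p(2)+pq_p(2)^{2}\pmod{p^{2}}$ and $H_{(p-1)/2}^{(2)}\equiv0\pmod p$ that you already cite --- but as written your plan stalls, or silently errs, at exactly the step that produces the claimed coefficients.
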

\begin{proof} By (\ref{Gpk}) again and (\ref{2p1p}), we have
\begin{align*}
G\left(p,\frac{p+1}2\right)&=\frac{(-1)^{(p-1)/2}}{2^{9p-8-(p+1)/2}}\frac{\binom{2p}p\binom{2p-2}{p-1}^2\left(\frac12+p\right)_{(p+1)/2}p!}{3p\left(\frac12+p-\frac{p+1}2\right)_{(p-1)/2}^2\left(\frac{p-1}2\right)!}\\
&\equiv\frac{(-1)^{(p-1)/2}p}{2^{9p-8-(p+1)/2}}\frac{\left(\frac12+p\right)_{(p-1)/2}(p-1)!}{\left(\frac p2+1\right)_{(p-1)/2}^2\left(\frac{p-1}2\right)!}\pmod{p^4}.
\end{align*}
It is easy to check that by (\ref{hp-1}) and $H_{(p-1)/2}^{(2)}\equiv0\pmod p$,
\begin{align*}
&\left(\frac12+p\right)_{(p-1)/2}\\
&\equiv\frac{(p-2)!!}{2^{(p-1)/2}}\left(1+p\sum_{k=1}^{(p-1)/2}\frac2{2k-1}+\frac{p^2}2\left(\sum_{k=1}^{(p-1)/2}\frac2{2k-1}\right)^2-\frac{p^2}2\sum_{k=1}^{(p-1)/2}\frac4{(2k-1)^2}\right)\\
&\equiv\frac{(p-1)!}{2^{p-1}\frac{p-1}2!}\left(1-pH_{(p-1)/2}+\frac{p^2}2H_{(p-1)/2}^2\right)\pmod{p^3}
\end{align*}
and
\begin{align*}
\left(\frac p2+1\right)_{(p-1)/2}^2&\equiv\left(\frac{p-1}2!\right)^2\left(1+\frac p2H_{\frac{p-1}2}+\frac{p^2}8\left(H_{\frac{p-1}2}^2-H_{\frac{p-1}2}^{(2)}\right)\right)^2\\
&\equiv\left(\frac{p-1}2!\right)^2\left(1+pH_{\frac{p-1}2}+\frac{p^2}2H_{\frac{p-1}2}^2\right)\pmod{p^3}.
\end{align*}
So
\begin{align}\label{imp}
\frac{\left(\frac12+p\right)_{(p-1)/2}(p-1)!}{\left(\frac p2+1\right)_{(p-1)/2}^2\left(\frac{p-1}2!\right)}&\equiv\frac{(p-1)!^2(1-pH_{(p-1)/2}+\frac{p^2}2H_{(p-1)/2}^2)}{2^{p-1}\left(\frac{p-1}2!\right)^4(1+pH_{(p-1)/2}+\frac{p^2}2H_{(p-1)/2}^2)}\notag\\
&\equiv\frac{\binom{p-1}{(p-1)/2}^2}{2^{p-1}}\left(1-2pH_{\frac{p-1}2}+2p^2H_{\frac{p-1}2}^2\right)\pmod{p^3}.
\end{align}
Hence by $H_{(p-1)/2}\equiv-2q_p(2)+pq_p(2)^2\pmod {p^2}$ and
$$
2^{(11p-11)/2}\equiv\left(\frac2p\right)\left(1+\frac{11p}2q_p(2)+\frac{99p^2}8q_p(2)^2\right)\pmod{p^3},
$$
 we have
\begin{align*}
G\left(p,\frac{p+1}2\right)&\equiv(-1)^{(p-1)/2}p\frac{1+4pq_p(2)+6p^2q_p(2)^2}{2^{(11p-11)/2}}\\
&\equiv p\left(\frac{-2}p\right)\left(1-\frac{3p}2q_p(2)+\frac{15p^2}8q_p(2)^2\right)\pmod{p^4}.
\end{align*}
Therefore the proof of Lemma \ref{Lemp+12} is complete.
\end{proof}
\begin{lem}\label{Lemgpk} For any prime $p>3$, we have
$$
\sum_{k=(p+3)/2}^{p-1}G(p,k)\equiv\left(\frac{-2}p\right)3p^2\left(\frac12q_p(2)-\frac58pq_p(2)^2\right)+3p^2(1+4p-6pq_p(2))\pmod{p^4}.
$$
\end{lem}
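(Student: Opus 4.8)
The plan is to work from the intermediate form for $G(n,k)$ used in deriving (\ref{Gpk}), specialized to $n=p$:
\[
G(p,k)=(-1)^{p+k}\frac{\left(\frac12\right)_{p+k}\left(\frac12\right)_{p-k}^2}{(2p+2k-1)(p-1)!^2(p-k)!\,2^{3p-k-4}}.
\]
For $(p+3)/2\le k\le p-1$ we have $\left(\frac12\right)_{p+k}=(2p+2k)!/(4^{p+k}(p+k)!)$, and the ratio $(2p+2k)!/(p+k)!$ hides exactly the two multiples $2p$ and $3p$, contributing a factor $6p^2$ times a $p$-adic unit (this explains both the constant $3$ and the power $p^2$ in the statement), while $\left(\frac12\right)_{p-k}^2$ and the remaining denominator factors are $p$-adic units. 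Hence $v_p(G(p,k))=2$ throughout this range, and it suffices to evaluate the unit part $G(p,k)/p^2$ modulo $p^2$.

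Next I would make the reflection substitution $q=p-k$, so that $q$ runs from $1$ to $(p-3)/2$. Using Wilson's theorem, the factorial reflection $(p-m)!\equiv(-1)^m/(m-1)!\pmod p$, and the identity $(2p-2k)!=\binom{2(p-k)}{p-k}(p-k)!^2$, the leading behaviour collapses to the clean shape
\[
\frac{G(p,p-q)}{p^2}\equiv-\frac{3\binom{2q}q}{8^q(2q+1)}\pmod p,
\]
so that, to leading order, $\sum_{k=(p+3)/2}^{p-1}G(p,k)\equiv-3p^2\sum_{q=1}^{(p-3)/2}\binom{2q}q/(8^q(2q+1))$. To reach the full precision I would carry every first-order ($O(p)$) correction: these enter through the harmonic sums produced by expanding the removed factors $2p,3p$ and the shifted Pochhammer and factorial ratios modulo $p^2$, together with the Fermat-quotient expansions of $2^{p}$, $4^{p}$ and $2^{(p\pm1)/2}$ and the congruences (\ref{hp-1}), (\ref{2p1p}) and Lemma \ref{mor}.

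The final step is to evaluate the resulting weighted central-binomial sum $\sum_{q}\binom{2q}q/(8^q(2q+1))$ and its first harmonic refinement modulo $p^2$, by the same circle of Bernoulli- and Euler-polynomial techniques used for Lemma \ref{Lem8k2kk} (and the congruences of Sun cited there). I expect these to evaluate purely in terms of the Fermat quotient $q_p(2)$, with no Euler number appearing, consistently with the statement; the two pieces of the answer then separate naturally. One piece is $3p^2(1+4p-6pq_p(2))$, which is precisely $-F(p-1,p-1)$ from Lemma \ref{Fp1p12} and is carried along so as to cancel it in (\ref{wz2}); the other is the character term $\left(\frac{-2}p\right)3p^2\left(\tfrac12q_p(2)-\tfrac58pq_p(2)^2\right)$, whose Legendre symbol arises from the quadratic-character value of $2$ inside the powers of two. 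A useful consistency check is $p=5$, where the tail is the single term $G(5,4)=-225225/2^{24}$, giving $G(5,4)/25\equiv1\pmod{25}$ and matching the right-hand side.

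I expect the main obstacle to be the modulo-$p^2$ bookkeeping of the second step: tracking every $O(p)$ harmonic correction through the reflection $q=p-k$ and the product expansions, and then matching the truncated binomial sums to their correct evaluations so that the spurious $E_{p-3}$ contributions cancel and only the $q_p(2)$ terms together with the cancellation term $3p^2(1+4p-6pq_p(2))$ survive.
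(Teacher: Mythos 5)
Your setup is correct as far as it goes: the valuation count (the factors $2p$ and $3p$ hiding inside $(2p+2k)!/(p+k)!$, all other factors being $p$-adic units), the reflection $q=p-k$, and the leading-order formula $G(p,p-q)/p^2\equiv-3\binom{2q}q/\bigl(8^q(2q+1)\bigr)\pmod p$ are all verifiably right (I checked the last one independently, and your $p=5$ value $G(5,4)=-225225/2^{24}$ is exact). But this establishes the lemma only modulo $p^3$. The statement is modulo $p^4$, so the unit $p^{-2}\sum_{k\geq(p+3)/2}G(p,k)$ must be evaluated modulo $p^2$, and that entire half of the work is what your proposal postpones: you never write down the $O(p)$ correction to each reflected term, and the two congruences your plan ultimately needs --- the truncated sum $\sum_{q=1}^{(p-3)/2}\binom{2q}q/\bigl(8^q(2q+1)\bigr)$ modulo $p^2$, and its harmonic-weighted refinement modulo $p$ --- are neither stated precisely nor proved, nor located in the literature. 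Moreover, the technique of Lemma \ref{Lem8k2kk} that you propose to reuse is intrinsically a modulo-$p$ method (it reduces everything to power sums $\sum x^{p-3}$ over arithmetic progressions via Bernoulli polynomials), so it cannot by itself deliver a modulo-$p^2$ evaluation; that ingredient is genuinely harder, not a routine extension.

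The weakest point is your expectation that the reflected sums evaluate ``purely in terms of the Fermat quotient $q_p(2)$, with no Euler number appearing.'' That is exactly what must be proved, and the paper's own proof shows why it cannot be assumed. The paper reduces differently: it splits the Pochhammer symbols at $(p\pm1)/2$ and shifts the summation index by $(p+1)/2$, arriving at $\sum_k\binom{(p-1)/2}k(-2)^k\bigl(\frac1{2k}-\frac{3p}{4k^2}+\frac{5p}{4k}H_k\bigr)$, which it evaluates through three Sigma-discovered identities together with Lemma \ref{Lemhk} and Z.-H. Sun's congruences. There, each of the three component sums individually carries a term $(-1)^{(p-1)/2}E_{p-3}$, and these disappear only because the specific coefficients conspire: $\frac12(-p)-\frac{3p}4+\frac{5p}4=0$. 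So the absence of $E_{p-3}$ from the final answer is the outcome of a delicate, computed cancellation, not a structural fact; your route must either exhibit the analogous cancellation or prove that each reflected sum is individually Euler-free. Until one of those is carried out, the proposal is a correct plan for the easy (mod $p^3$) part plus an unverified hope for the part that constitutes the lemma's actual content.
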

\begin{proof}
By (\ref{Gpk}), we have
\begin{align*}
G(p,k)&=\frac{(-1)^{k+1}}{2^{9p-k-8}}\frac{\binom{2p}p\binom{2p-2}{p-1}^2\left(\frac12+p\right)_kp!}{(2p+2k-1)\left(\frac12+p-k\right)_{k-1}^2(p-k)!}\\
&\equiv\frac{(-1)^{k+1}}{2^{9p-k-9}}\frac{p^3\left(\frac12+p\right)_k(p-1)!}{(2p-1)^2(2p+2k-1)\left(\frac12+p-k\right)_{k-1}^2(p-k)!}\\
&=\frac{(-1)^{k+1}}{2^{9p-k-9}}\frac{6p^2\left(\frac12+p\right)_{(p-1)/2}\left(\frac{3p}2+1\right)_{k-(p+1)/2}(p-1)!}{4(2p+2k-1)\left(\frac p2+1\right)_{(p-1)/2}^2\left(\frac p2-k+\frac{p+1}2\right)_{k-(p+1)/2}^2(p-k)!}.
\end{align*}
So modulo $p^4$, we have
\begin{align*}
\sum_{k=(p+3)/2}^{p-1}G(p,k)\equiv-\frac{3p^2\left(\frac12+p\right)_{(p-1)/2}(p-1)!}{2^{9p-8}\left(\frac p2+1\right)_{(p-1)/2}^2}\sum_{k=1}^{(p-3)/2}\frac{(-2)^{k+(p+1)/2}\left(\frac{3p}2+1\right)_{k}}{(3p+2k)\left(\frac p2-k\right)_{k}^2\left(\frac{p-1}2-k\right)!}.
\end{align*}
It is easy to check that
\begin{align*}
&\sum_{k=1}^{(p-3)/2}\frac{(-2)^{k+(p+1)/2}\left(\frac{3p}2+1\right)_{k}}{(3p+2k)\left(\frac p2-k\right)_{k}^2\left(\frac{p-1}2-k\right)!}\equiv\sum_{k=1}^{(p-3)/2}\frac{k!\left(1+\frac{3p}2H_k\right)(-2)^k}{(3p+2k)k!^2\left(1-\frac p2H_k\right)^2\left(\frac{p-1}2-k\right)!}\\
&\equiv\frac1{\left(\frac{p-1}2\right)!}\sum_{k=1}^{(p-3)/2}\binom{(p-1)/2}k(-2)^k\left(1+\frac{5p}2H_k\right)\left(\frac1{2k}-\frac{3p}{4k^2}\right)\\
&\equiv\frac1{\left(\frac{p-1}2\right)!}\sum_{k=1}^{(p-3)/2}\binom{(p-1)/2}k(-2)^k\left(\frac1{2k}-\frac{3p}{4k^2}+\frac{5p}{4k}H_k\right)\pmod{p^2}.
\end{align*}
Hence by (\ref{imp}) and Lemma \ref{mor}, we have
\begin{align*}
&\sum_{k=\frac{p+3}2}^{p-1}G(p,k)\equiv\frac{(-1)^{\frac{p-1}2}3p^2}{2^{9p-8-\frac{p+1}2}}\frac{\binom{p-1}{\frac{p-1}2}^2}{2^{p-1}}(1+4pq_p(2))\sum_{k=1}^{\frac{p-3}2}\binom{\frac{p-1}2}k(-2)^k\left(\frac1{2k}-\frac{3p}{4k^2}+\frac{5p}{4k}H_k\right)\\
&\equiv\frac{(-1)^{\frac{p-1}2}3p^2}{2^{6p-5-\frac{p+1}2}}(1+4pq_p(2))\sum_{k=1}^{\frac{p-3}2}\binom{\frac{p-1}2}k(-2)^k\left(\frac1{2k}-\frac{3p}{4k^2}+\frac{5p}{4k}H_k\right)\pmod{p^4}.
\end{align*}
By the software package {\texttt{Sigma}} we find the following three identities
\begin{align*}
&\sum_{k=1}^n\binom{n}k\frac{(-2)^k}k=-H_{n}+\sum_{k=1}^n\frac{(-1)^k}k,\\
&\sum_{k=1}^n\binom{n}k\frac{(-2)^k}{k^2}=-\frac12H_{n}^{(2)}-\frac12H_n^2+\sum_{k=1}^n\frac{1}k\sum_{j=1}^k\frac{(-1)^j}j,\\
&\sum_{k=1}^n\binom{n}k\frac{(-2)^k}{k}H_k=-\frac12H_{n}^{(2)}-\frac12\left(\sum_{k=1}^n\frac{(-1)^k}k\right)^2+\sum_{k=1}^n\frac{-1)^kH_k}k.
\end{align*}
Set $n=(p-1)/2$ in the above identities, we have
\begin{align*}
\sum_{k=1}^{\frac{p-1}2}\binom{\frac{p-1}2}k\frac{(-2)^k}k=H_{\lfloor\frac p4\rfloor}-2H_{\frac{p-1}2}\equiv q_p(2)-\frac p2q_p(2)^2-p(-1)^{\frac{p-1}2}E_{p-3}\pmod{p^2},
\end{align*}
\begin{align*}
\sum_{k=1}^{\frac{p-1}2}\binom{\frac{p-1}2}k\frac{(-2)^k}{k^2}&=-\frac12H_{\frac{p-1}2}^{(2)}-\frac12H_{\frac{p-1}2}^2+\sum_{j=1}^{\frac{p-1}2}\frac{(-1)^j}j(H_{\frac{p-1}2}-H_{j-1})\\
&\equiv-\sum_{j=1}^{(p-1)/2}\frac{(-1)^j}jH_{j-1}\equiv-\frac12q_p(2)^2+(-1)^{(p-1)/2}E_{p-3}\pmod p,
\end{align*}
\begin{align*}
\sum_{k=1}^{(p-1)/2}\binom{(p-1)/2}k\frac{(-2)^k}{k}H_k\equiv\sum_{j=1}^{(p-1)/2}\frac{(-1)^j}jH_{j}\equiv(-1)^{(p-1)/2}E_{p-3}\pmod p.
\end{align*}
Therefore
\begin{align*}
&\sum_{k=\frac{p+3}2}^{p-1}G(p,k)\equiv\frac{(-1)^{\frac{p-1}2}3p^2}{2^{6p-5-\frac{p+1}2}}(1+4pq_p(2))\left(\frac12q_p(2)+\frac p8q_p(2)^2+(-2)^{\frac{p-1}2}(1+4p-5pq_p(2))\right)\\
&\equiv(-1)^{\frac{p-1}2}3p^2\left(\frac2p\right)\left(\frac{1}2q_p(2)-\frac58pq_p(2)^2\right)+3p^2(1+4p-6pq_p(2))\\
&=3p^2\left(\frac{-2}p\right)\left(\frac{1}2q_p(2)-\frac58pq_p(2)^2\right)+3p^2(1+4p-6pq_p(2))\pmod{p^4}.
\end{align*}
Now the proof of Lemma \ref{Lemgpk} is complete.
\end{proof}
\noindent{\it Proof of Theorem \ref{Thsun1}}. Combining (\ref{wz2}), Lemmas \ref{Fp1p12}, \ref{G13}-\ref{Lemgpk}, we immediately obtain the desired result
$${}_4F_3\bigg[\begin{matrix} \frac76&\frac12&\frac12&\frac12\\ &\frac16&1&1\end{matrix}\bigg|-\frac18\bigg]_{p-1}=\sum_{n=0}^{p-1}\frac{6n+1}{(-512)^n}\binom{2n}n^3\equiv p\left(\frac{-2}p\right)+\frac{p^3}{16}E_{p-3}\left(\frac14\right)\pmod{p^4}.$$
So we finish the proof of Theorem \ref{Thsun1}.\qed

\vskip 3mm \noindent{\bf Acknowledgments.}
The author is funded by the National Natural Science Foundation of China (No. 12001288).


\begin{thebibliography}{99}
\small \setlength{\itemsep}{-.8mm}
\bibitem{CXH-rama-2016}Y. G. Chen, X. Y. Xie and B. He, On some congruences of certain binomial sums, Ramanujan. J. {\bf 40} (2016), 237--244.

\bibitem{g-jmaa-2018}V.J.W. Guo, A $q$-analogue of a Ramanujan-type supercongruence involving central binomial coefficients, J. Math. Anal. Appl. {\bf 458} (2018), 590--600.

\bibitem{guo-rama-2019}V.J.W. Guo, $q$-Analogues of the (E.2) and (F.2) supercongruences of Van Hamme, Ramanujan. J. {\bf 49} (2019), 531--544.

\bibitem{gl-arxiv-2019}V.J.W. Guo and J.-C. Liu, Some congruences related to a congruence of Van Hamme, Integral Transforms Spec. Funct. {\bf 31} (2020), 221--231.

\bibitem{guos-rm-2020}V.J.W. Guo and M. Schlosser, Some new q-congruences for truncated basic hypergeometric series: even powers, Results Math. {\bf 75} (2020), Art. 1. Open Access.

\bibitem{he-jnt-2015}B. He, On the divisibility properties of certain binomial sums, J. Number Theory, {\bf 147} (2015), 133--140.

\bibitem{hm-rama-2017}D.-W. Hu and G.-S. Mao, On an extension of a van Hamme supercongruence, Ramanujan. J. {\bf 42} (2017), 713--723.

\bibitem{L} J.-C. Liu, Semi-automated proof of supercongruences on partial sums of hypergeometric series, J. Symbolic Comput. {\bf 93} (2019), 221--229.

\bibitem{long-2011-pjm} L. Long, Hypergeometric evaluation identities and supercongruences, Pacific J. Math. {\bf 249} (2011), no 2, 405--418.

\bibitem{MOS}W. Magnus, F. Oberhettinger and R.P. Soni, Formulas and theorems for the special functions of mathematical physics, 3rd ed., Sringer-Verlag, New York, 1966, pp. 25--32.

\bibitem{M} G.-S. Mao, Proof of a supercongruence via the Wilf-Zeilberger method, J. Symbolic Comput. {\bf 107}(2021), 269--278.

\bibitem{MWW} G.-S. Mao, C. Wang and J. Wang, Symbolic summation methods and congruences invloving harmonic numbers, C. R. Acad. Sci. Paris, Ser. I, {\bf 357}(2019), 756--765.

\bibitem {mz-rama-2019} G.-S. Mao and T. Zhang, Proof of Sun's conjectures on super congruences and the divisibility of certain binomial sums, Ramanujan. J., {\bf 50} (2019), 1--11.

\bibitem{mt-jnt-2013}S. Mattarei and R. Tauraso, Congruences for central binomial sums and finite polylogarithms, J. Number Theory, {\bf 133} (2013), 131--157.

\bibitem{Mor} F. Morley, Note on the congruence $2^{4n}\equiv(-1)^n(2n)!/(n!)^2$, where $2n+1$ is a prime, Ann. Math. {\bf 9} (1895), 168--170.

\bibitem{oz-jmaa-2016}R. Osburn, W. Zudilin, On the (K.2) supercongruence of Van Hamme, J. Math. Anal. Appl. {\bf 433} (2016), 706--711.

\bibitem{S}C. Schneider, Symbolic summation assists combinatorics, S\'{e}m. Lothar. Combin. {\bf 56} (2007), Article B56b.

\bibitem{sun-jnt-2008} Z.-H. Sun, Congruences involving Bernoulli and Euler numbers, J. Number Theory, {\bf 128} (2008), 280--312.

\bibitem{sun-scm-2011}Z.-W. Sun, Super congruences and Euler numbers, Sci. China Math. {\bf 54} (2011), 2509--2535.

\bibitem{sun-ijm-2012}Z.-W. Sun, A refinement of a congruence result by van Hamme and mortenson, Illinois J. Math. {\bf 56} (2012), no. 3, 967--979.

\bibitem{sun-ijm-2015}Z.-W. Sun, A new series for $\pi^3$ and related congruences, Internat. J. Math.  {\bf 26} (2015), no. 8, 1550055 (23 pages).

\bibitem{sun-numb-2019}Z.-W. Sun, Open conjectures on congruences, J. Nanjing Uni. math. Biquart., {\bf 36} (2019), 1--99.

\bibitem{wolstenholme-qjpam-1862}J. Wolstenholme, On certain properties of prime numbers, Quart. J. Pure Appl. Math. {\bf 5} (1862), 35--39.

\end{thebibliography}
\end{document}